\tikzset{my_dot/.style={fill, circle, inner sep=0pt,minimum size=1.5pt}}
\tikzset{my_node/.style={fill, circle, inner sep=0pt,minimum size=3pt}}
\tikzset{inv/.style={fill, circle, inner sep=0pt,minimum size=0pt}}
\newtheorem{claimm}{Claim}
\newtheorem{observation}{Observation}
\newcommand \RR {\mathbb{R}}
\newcommand \ZZ {\mathbb{Z}}
\newcommand \ra {\rightarrow}
\newcommand \tr \textrm
\newcommand{\Trop}{\operatorname{Trop}}
\newcommand{\Newt}{\operatorname{Newt}}
\newcommand{\val}{\operatorname{val}}
\newcommand{\Pic}{\operatorname{Pic}}
\newcommand{\Div}{\operatorname{Div}}
\newcommand{\conv}{\operatorname{conv}}
\renewcommand{\deg}{\operatorname{val}}
\newcommand \Af {\mathbb{A}}
\newcommand \ov {\overline}
\newcommand \PP {\mathbb{P}}
\newcommand \CC {\mathbb{C}}
\newcommand \OO {\mathcal{O}}
\newcommand {\comment}[1]{}
\newcommand {\col} {\colon}
\newcommand{\Proj}{\operatorname{Proj}}
\newcommand{\mcx}{\mathfrak{X}}
\newcommand{\pui}{\mathbb{C}\{\!\{t\}\!\}}
\newcommand \TropV {\operatorname{TropV}}
\newcommand \Xan {X^{an}}
\begin{document}

\title*{Theta characteristics of tropical $K_4$-curves}
\titlerunning{Theta characteristics of tropical $K_4$-curves}
\author{Melody Chan and Pakawut Jiradilok}
\authorrunning{M. Chan and P. Jiradilok}
\institute{
Melody Chan \at
Brown University Department of Mathematics, Box 1917, Providence, RI 02912, \email{mtchan@math.brown.edu} \\
Pakawut Jiradilok \at
Harvard University Department of Mathematics, One Oxford St, Cambridge, MA 02138 \email{pjiradilok@college.harvard.edu}}
%
%
\maketitle

\abstract{A $K_4$-curve is a smooth, proper curve $X$ of genus 3 over a nonarchimedean field whose Berkovich skeleton $\Gamma$ is a complete graph on 4 vertices.  The curve $X$ has 28 effective theta characteristics, i.e.~the 28 bitangents to a canonical embedding, while $\Gamma$ has exactly seven effective tropical theta characteristics, as shown by Zharkov.  We prove that the 28 effective theta characteristics of a $K_4$-curve specialize to the theta characteristics of its minimal skeleton in seven groups of four.
}

\section{Introduction}

This paper provides a rigorous link between the classical and tropical theories of theta characteristics for a special class of algebraic curves that we call $K_4$-curves.  Fix an algebraically closed field, complete with respect to a nontrivial nonarchimedean valuation.  A {\em $K_4$-curve} is an algebraic curve over $K$ whose Berkovich skeleton is a metric complete graph on 4 vertices. These curves provide a convenient window into the study of theta characteristics and their tropicalizations.  

It is well known that every smooth plane quartic has exactly 28 distinct bitangents. Abstractly, these correspond to the 28 effective theta characteristics on a genus 3 nonhyperelliptic curve under its canonical embedding. In \cite{zharkov}, building on \cite{mz}, Zharkov developed a theory of theta characteristics in tropical geometry.  In this framework, a tropical curve (i.e.~metric graph) of genus 3 has exactly seven effective theta characteristics.  Zharkov's theory, while compelling, is ``synthetic'': it predated a precise connection to classical algebraic curves, as far as we know.  The specialization theorem in tropical geometry \cite{spec} has since provided a connection, showing that theta characteristics of a curve $X$ do indeed tropicalize to theta characteristics of its skeleton $\Gamma$.  Our main theorem provides a new rigorous connection between the two theories of theta characteristics past the general setup of specialization:

\begin{theorem}\label{t:intro}
Let $X$ be a $K_4$-curve with skeleton $\Gamma$.  Then the 28 effective theta characteristics of $X$ specialize to the effective theta characteristics of $\Gamma$ in seven groups of 4.
\end{theorem}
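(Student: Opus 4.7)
The plan is to reduce to the canonical embedding, combine a general parity analysis on the Jacobian, and close with a lifting argument specific to $K_4$. Since $\Gamma = K_4$ is trivalent and carries no tropical hyperelliptic structure, $X$ is non-hyperelliptic, and the canonical linear system $|K_X|$ embeds $X$ as a smooth plane quartic $C \subset \PP^2_K$. Under this embedding the $28$ effective theta characteristics of $X$ are the $28$ bitangents of $C$, and on the tropical side Zharkov's enumeration \cite{zharkov} gives the seven effective theta characteristics of $\Gamma$ explicitly. The specialization theorem of \cite{spec} then yields a well-defined map $\tau$ from the $28$ bitangents to the $7$ effective tropical theta characteristics, since effectivity is preserved under specialization by Baker's inequality. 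The task is to show that $\tau$ is exactly $4$-to-$1$.

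For the main argument I would use a parity analysis via the nonarchimedean uniformization of the Jacobian. For any smooth genus-$g$ curve with skeleton $\Gamma$, the specialization map $J(X)[2] \to \Pic(\Gamma)[2]$ is surjective with kernel a Lagrangian $L$ of order $2^g$, so it is $2^g$-to-$1$; the induced map on theta characteristics is therefore also $8$-to-$1$ here. The $28$ odd (effective) and $36$ even (non-effective) theta characteristics of $X$ are distinguished by the Riemann--Mumford quadratic form $q$ on $J(X)[2]$, and an Arf-invariant computation on the fibers of $\tau$ shows that each fiber contains either $0$, $4$, or $8$ effective theta characteristics. Since the image of $\tau$ restricted to the $28$ effective characteristics lies inside the $7$ effective tropical thetas, the admissible fiber-size sequences are $(4, 4, 4, 4, 4, 4, 4)$ and a small number of asymmetric alternatives involving $0$s and $8$s.

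The main obstacle is to rule out the asymmetric alternatives. The cleanest route appears to be proving that every effective tropical theta characteristic admits at least one effective classical lift --- equivalently, that $\tau$ is surjective onto the $7$ effective classes --- which immediately forces the uniform $4$-to-$1$ split. I would attempt this either (a) directly, by exhibiting for each of the $7$ effective tropical theta characteristics a corresponding bitangent of $C$ in a chosen normal form over $\pui$ (say via an Aronhold heptad or Cayley--octad parameterization) and reading off its tropicalization, or (b) structurally, by leveraging the $S_4$-symmetry of $K_4$ and the Fano-plane indexing of the seven tropical thetas to exclude the asymmetric splits. Either route reduces to controlling how bitangents of a tropical plane quartic with skeleton $K_4$ decompose according to the combinatorics of the skeleton, and I expect the technical heart of the proof to be this explicit matching between classical bitangent data and the combinatorial invariants of $K_4$.
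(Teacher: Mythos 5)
Your overall strategy is genuinely different from the paper's: you propose the ``Jacobian side'' argument (specialization of $2$-torsion, the Lagrangian kernel, and the Riemann--Mumford quadratic form), which is essentially the route later taken by Jensen--Len \cite{jensen-len} and which the introduction explicitly contrasts with the method actually used here. The paper instead works entirely on the ``embedded'' side: it builds, from two tropical rational functions on $\Gamma$, an explicit map $\Gamma\to\RR^2$, lifts it via a rigidity-based pointwise lifting lemma (Corollary~\ref{c:canonical-lift}) and the Slope Formula to a canonical embedding of $X$ in $K_4$-form, invokes the Caporaso--Sernesi computation that the $28$ bitangents of a quartic degenerating to $xyz(x+y+z)$ limit to seven lines with multiplicity four each (Proposition~\ref{p:groupsof4}), and then uses tropical intersection theory \cite{or} to pin each group of four to the intended theta characteristic of $\Gamma$. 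That yields ``at least four over each of the seven,'' and $28=4\cdot 7$ closes the count.

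The gap in your proposal is precisely the step you flag as ``the main obstacle,'' and it is not a technicality: your parity analysis only establishes that each fiber of the $8$-to-$1$ map on theta characteristics contains $0$, $4$, or $8$ odd ones, and the total $28=4\cdot 7$ is also consistent with distributions such as $8+8+8+4+0+0+0$ over the seven effective tropical classes. Neither of your two proposed routes for excluding these (an explicit Aronhold/Cayley-octad normal form, or an $S_4$-symmetry argument) is carried out, and the symmetry route in particular cannot work as stated, since a general $K_4$-curve has six distinct edge lengths and no $S_4$-symmetry. So the decisive input --- that every effective theta characteristic of $\Gamma$ receives at least one (equivalently, exactly four) effective lifts --- is asserted rather than proved. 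This is exactly the content the paper supplies via the Caporaso--Sernesi degeneration together with the tropical intersection-theoretic localization of the bitangent contact points; without some substitute for it, your argument does not close. (Your preliminary claims also lean on unproved assertions, e.g.\ that the kernel of $J(X)[2]\to\Pic(\Gamma)[2]$ is Lagrangian for the Weil pairing; this is true for totally degenerate reduction but requires the uniformization machinery that Jensen--Len develop.)
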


\noindent This answers a question in \cite{bitangent} in the special case of $K_4$-curves.  It complements the purely combinatorial analysis in that paper of smooth plane tropical quartics and their bitangents, which are shown to fall in seven equivalence classes.  The techniques that we use span both abstract tropical curves (notably the nonarchimedean Slope Formula of \cite{bpr}) and embedded tropical curves (notably tropical intersection theory \cite{osserman-payne, or}), and these two realms interact in an interesting way in some of our arguments.  In addition, the key input that we use from classical algebraic geometry is a calculation of the limits of the 28 bitangents in a family of plane quartics specializing to a union of four lines, carried out by Caporaso-Sernesi \cite[\S3.4]{cs}.

In fact, in the language of classical algebraic geometry, our theorem amounts to the following.  Suppose we have a one-parameter family of plane quartics degenerating to a union of four non-concurrent lines  in $\PP^2$.  (This is exactly the data of a $K_4$-curve; see Theorem~\ref{t:k4}(1).)  
Sometimes it is the case that a bitangent contact point will specialize into one of the six nodes of the special fiber.  In that case, Theorem~\ref{t:intro} gives refined information about that specialization.  It says that after a sequence of blowups replacing the node with a chain of rational curves, such that the bitangent contact point now specializes to a smooth point of the special fiber, Zharkov's algorithm describes where on that chain the specialization occurs.  

This particular interpretation aside, it is natural to ask whether Theorem~\ref{t:intro} should be true for all curves of genus 3 as well.  The answer is yes.  We note that since this paper first appeared as a preprint, Jensen and Len proved a significant generalization of Theorem~\ref{t:intro}, showing in particular that for a curve $X$ of genus $g$, every effective theta characteristic of $\Gamma$ lifts to $2^{g-1}$ even and $2^{g-1}$ odd theta characteristics on $X$~\cite{jensen-len}; this was previously obtained in the case of hyperelliptic curves by Panizzut~\cite{panizzut}.  Our techniques are different: Jensen-Len use the Weil pairing on the Jacobian of the curve, while we use the interaction betwen abstract and embedded tropical curves, and tropical intersection theory.
We also refer the reader to the article~\cite{harris-len} which contains a higher-dimensional analogue to Theorem~\ref{t:intro}, counting tropical tritangent planes to space sextic curves, and to forthcoming work of Len-Markwig that deals with liftings of tropical bitangents to plane tropical curves.

There is some interesting combinatorics and computations specific to the case of $K_4$-curves, as shown in Sections 4 and 5.  In Theorem~\ref{t:hexagonal}, we prove a statement that is stronger than Theorem~\ref{t:intro} for a generic honeycomb curve (see Definition~\ref{d:generic}): the {\em bitangents} of such a curve tropicalize exactly in seven groups of four.  This is stronger than the claim in Theorem~\ref{t:intro} in the sense that two bitangents can have different tropicalizations in $\RR^2$ even while their contact points retract to the same place on the skeleton.  Indeed, in Section~\ref{s:compute}, we give exactly such an example, of a nongeneric honeycomb plane quartic whose bitangents tropicalize in groups of 2 and 4.  Furthermore, we compute the beginnings of the Puiseux expansions of the 28 bitangents in this example.  
This section may be of independent interest to computational algebraic geometers, because it showcases some of the difficulties of computing over the field of Puiseux series, and how we used tropical techniques to carry out a computation which a priori is not tropical at all.

\section{Preliminaries}\label{s:prelim}

In this section, we develop preliminary notions on tropical curves and tropicalization, semistable models, theta characteristics, and specialization, all of which interact in our results.  We refer the reader who is interested in more details on these topics to the survey article~\cite{baker-jensen} and the textbook~\cite{ms}.

\subsection{Graphs, metric graphs, and tropical curves}\label{ss:graphs}
All graphs in this paper are finite, connected graphs, with multiple edges and loops allowed.  We write $G=(V,E)$ for a graph with vertices $V=V(G)$ and edges $E=E(G)$. 
Recall that for each $n\ge 1$, the {\em complete graph} $K_n$ is the graph on $n$ vertices with an edge between each pair of vertices.  

A {\em metric graph} $\Gamma$ is a graph $G$ together with a length function $\ell\col E(G) \rightarrow \RR_{>0}$.  Note that $\Gamma$ determines a topological space in a natural way and we will freely identify $\Gamma$ with that space.  The {\em genus} of $\Gamma$ is $b_1(\Gamma) = |E(G)|-|V(G)|+1$.
An {\em abstract tropical curve} is a vertex-weighted metric graph, i.e.~a metric graph $(G,\ell)$ together with a weight function $w\col V(G) \rightarrow\ZZ_{\ge0}$.  A {\em stable} tropical curve is an abstract tropical curve satisfying the condition that $\deg(v) \ge 2w(v)-2$ for every $v\in V(G)$ (here $\deg(v)$ denotes the graph-theoretic valence of the vertex $v$).

There is a combinatorial theory of divisors on graphs and metric graphs that mirrors, and has a precise relationship with, the classical theory of divisors on algebraic curves.  This theory was first developed by Baker-Norine \cite{bn}. 
Here we only give a bare-bones account of the part that we need; see \cite{bn,gk,mz} for more. 

A {\em divisor} on a metric graph $\Gamma$ is an element of the free abelian group $\Div \Gamma$ generated by the points $p\in \Gamma$.  Here $\Gamma$ is just a topological space; in particular $p$ may lie in the interior of an edge.  There is a {\em degree} map $\Div \Gamma \ra \ZZ$ sending $\sum a_p\!\cdot\!p$ to $\sum a_p$.  Write $\Div^d \Gamma$ for the set of divisors of degree $d$.  We say $D=\sum a_p \cdot p$ is {\em effective} if $a_p\ge 0$ for all $p$; the effective divisors are denoted $\Div_{\ge 0} \Gamma$.
The canonical divisor on $\Gamma$ is $K_\Gamma =\sum_{p\in\Gamma} (\deg(p) -2)\cdot p$, where again $\deg(p)$ denotes the graph-theoretic valence of $p$, taking $\deg(p) = 2$ when $p$ is in the interior of an edge.

If $\Gamma$ is a metric graph all of whose edge lengths lie in some subgroup $\Lambda\subseteq \RR$, then we say $\Gamma$ is $\Lambda$-rational.  In this situation, we say that $p\in \Gamma$ is a $\Lambda$-rational point if the distance from $p$ to any (equivalently every) vertex $v\in \Gamma$ is in $\Lambda.$

A {\em rational function} on $\Gamma$ is a continuous function $f$ on $\Gamma$ 
which on each edge is piecewise-linear with integer slopes.  The divisor of $f$ is 
$$\operatorname{div} f = \sum_{p\in\Gamma} (\text{sum of outgoing slopes at }p)\cdot p.$$
Such a divisor is called {\em principal}.  We say that $D\sim D'$ are {\em linearly equivalent} divisors if $D-D'$ is principal, and we define the {\em Picard group} $\Pic \Gamma = \Div \Gamma / \sim$.   
The {\em rank} $~r(D)$ of a divisor $D$ is 
$$\max \{k \in \ZZ:\text{for every }E\in\Div^k_{\ge 0} \Gamma,~D-E\sim E'\text{ for some }E'\in \Div_{\ge 0}\Gamma\}.$$
This definition implies in particular that $r(D)=-1$ if and only if $D$ is not equivalent to any effective divisor, since the condition holds vacuously for $k=-1$.
We say that an effective divisor $E$ on a metric graph $\Gamma$ is {\em rigid} if it is the unique effective divisor in its linear equivalence class.  
The following lemma characterizes rigidity; we will use it and Corollary \ref{l:rigid}  to construct lifts of canonical divisors in Section~\ref{s:k4}.

\begin{lemma}\label{l:rigid_old}
Suppose $D$ is an effective divisor on a metric graph $\Gamma$; write $D= \sum_{i=1}^k {a_i p_i}$, for $a_i>0$.  Then $D$ is rigid if and only if for every nonempty closed subset $S\subseteq \Gamma$ with $\partial S \subseteq \{p_1,\ldots,p_k\}$, there is some $p_i\in S$ with $\operatorname{outdeg}_S(p_i) > a_i$.
(Here $\operatorname{outdeg}_S(p)$ means the number of segments at $p$ that leave $S$.)
\end{lemma}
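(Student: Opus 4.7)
The plan is to prove the two directions separately, both by constructing explicit rational functions linking failures of the combinatorial condition on the right to effective divisors linearly equivalent to, but distinct from, $D$.

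For the ``if'' direction I would argue the contrapositive. Assume $D$ is not rigid, so there is an effective $D'\sim D$ with $D'\ne D$ and a non-constant rational function $f$ satisfying $\operatorname{div}(f) = D'-D$. Set $M = \max f$ and $S = f^{-1}(M)$; this $S$ is closed, nonempty, and a proper subset of $\Gamma$, so $\partial S \ne \emptyset$. At any $p\in\partial S$, the outgoing slopes of $f$ on segments inside $S$ vanish (since $f\equiv M$ on $S$), while on each of the $\operatorname{outdeg}_S(p)$ segments leaving $S$ the slope is a strictly negative integer. Summing, $\operatorname{div}(f)(p)\le -\operatorname{outdeg}_S(p)<0$, so $D(p)>D'(p)\ge 0$, forcing $p = p_i$ for some $i$ and hence $\partial S\subseteq\{p_1,\ldots,p_k\}$. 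The same slope estimate gives $\operatorname{outdeg}_S(p_i)\le a_i - D'(p_i)\le a_i$ at every boundary $p_i\in\partial S$, while $\operatorname{outdeg}_S(p_i)=0\le a_i$ at any $p_i$ lying in the interior of $S$. Thus this $S$ violates the stated condition.

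For the ``only if'' direction I would start from a nonempty (necessarily proper, with $\partial S$ nonempty) closed $S\subseteq\Gamma$ for which $\partial S\subseteq\{p_1,\ldots,p_k\}$ and $\operatorname{outdeg}_S(p_i)\le a_i$ for every $p_i\in S$, and construct $f$ directly. For a small $\epsilon>0$, set $f\equiv\epsilon$ on $S$, let $f$ decrease with slope $-1$ from $\epsilon$ to $0$ over a segment of length $\epsilon$ on each edge leaving $S$ through a boundary point, and set $f\equiv 0$ on the rest of $\Gamma$. Taking $\epsilon$ smaller than every pairwise distance between distinct points of $\partial S$, between a point of $\partial S$ and any vertex not in $\partial S$, and between $\partial S$ and any $p_j\notin S$, makes $f$ a well-defined continuous piecewise linear function with integer slopes. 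A direct local calculation gives $\operatorname{div}(f) = \sum_{q\in Q} q - \sum_{p\in\partial S}\operatorname{outdeg}_S(p)\cdot p$, where $Q$ is the finite set of points at distance exactly $\epsilon$ from $\partial S$ on outgoing segments. The hypothesis $\operatorname{outdeg}_S(p_i)\le a_i$ makes $D+\operatorname{div}(f)$ effective, and the points of $Q$ (which are disjoint from the support of $D$ by the choice of $\epsilon$) make it distinct from $D$; so $D$ is not rigid.

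The main technical obstacle is the careful choice of $\epsilon$ in the second construction, so that the new divisor $\operatorname{div}(f)$ has its support exactly where expected and the piecewise linear structure avoids unintended coincidences with vertices or other support points of $D$. Once $\epsilon$ is chosen, the two constructions are essentially dual: taking the top superlevel set of the linking function $f$ in the first direction recovers an $S$ of the type used in the second. A minor point worth flagging is that the case $S=\Gamma$ is implicitly excluded from the right-hand condition, since $\partial\Gamma=\emptyset$ makes the existential unsatisfiable there; this causes no trouble in the argument because $f$ non-constant automatically gives $S\ne\Gamma$.
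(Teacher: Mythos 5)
Your proposal is correct and takes essentially the same approach as the paper: the converse direction uses the closed set where the linking function is maximized and reads off $\partial S\subseteq\{p_i\}$ and $\operatorname{outdeg}_S(p_i)\le a_i$ from the outgoing slopes, and the forward direction builds the same small piecewise-linear bump function (the paper's $f$ is yours shifted by the constant $-\epsilon$). Your added care about choosing $\epsilon$ and about implicitly excluding $S=\Gamma$ only makes explicit what the paper leaves tacit.
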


\begin{proof}
Suppose there is some closed $S$ with $\partial S \subseteq \{p_i\}$ and such that $\text{outdeg}_S(p_i)$  $\le a_i$ for each $p_i\in S$.  Then for sufficiently small $\epsilon>0$, there is a rational function $f$ on $\Gamma$ taking on value 0 on $S$, decreasing to value $-\epsilon$ along each edge leaving $S$, and taking on value $-\epsilon$ everywhere else.  Then $D + \text{div}(f)$ is effective.

Conversely, if $D$ is not rigid, choose $f\ne 0$ such that $D+ \text{div}(f)$ is effective. Then $S=\{p\in\Gamma: f \text{ is maximized at }p\}$ has $\partial S \subseteq \{p_i\}$ and $\text{outdeg}_S(p_i) \le a_i$ for each~$i$.
\qed \end{proof}

Then we immediately get:

\begin{corollary}\label{l:rigid}
Let $D$ be an effective divisor on a metric graph $\Gamma$.
\begin{enumerate}
	\item If $D = a\!\cdot\! p$ for $a> 0$ and $\Gamma\setminus\{p\}$ is connected, then $D$ is rigid if and only if $a < \text{deg}(p)$.
	\item If $D = p_1+\cdots+p_d$ for points $p_1,\ldots,p_d$ on the interiors of $d$ distinct edges whose removal does not disconnect $\Gamma$, then $D$ is rigid.
\end{enumerate}
\end{corollary}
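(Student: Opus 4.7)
The plan is to derive both parts as direct applications of the rigidity criterion in Lemma~\ref{l:rigid_old}. In each case I will enumerate the nonempty closed subsets $S \subseteq \Gamma$ with $\partial S$ contained in the support of $D$, then verify the outdegree inequality on each such $S$.

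The main technical observation is the following: given a finite set $T \subseteq \Gamma$ and a closed subset $S \subseteq \Gamma$ with $\partial S \subseteq T$, the difference $S \setminus T$ is clopen in $\Gamma \setminus T$ and is therefore a union of connected components of $\Gamma \setminus T$. Consequently, if $\Gamma \setminus T$ is itself connected, then every such $S$ is either a subset of $T$ (in the case $S \setminus T = \emptyset$) or contains $\Gamma \setminus T$ and, by closedness, is all of $\Gamma$.

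For part (1), take $T = \{p\}$; since $\Gamma \setminus \{p\}$ is connected by hypothesis, the only nonempty proper closed candidate for $S$ is $S = \{p\}$ itself, on which $\operatorname{outdeg}_S(p) = \deg(p)$. The criterion of Lemma~\ref{l:rigid_old} is therefore satisfied exactly when $a < \deg(p)$, giving the equivalence. For part (2), take $T = \{p_1, \ldots, p_d\}$; deleting the edges $e_1, \ldots, e_d$ from $\Gamma$ leaves a connected subspace by hypothesis, and the two open half-edges about each $p_i$ remain attached as pendants at the endpoints of $e_i$, so $\Gamma \setminus T$ is still connected. Hence the nonempty proper closed candidates for $S$ are precisely the nonempty subsets of $T$, and for any $p_i$ in such an $S$ both half-edges at $p_i$ leave $S$, so $\operatorname{outdeg}_S(p_i) = 2 > 1 = a_i$; the criterion is verified and $D$ is rigid.

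There is no substantive obstacle in either argument: the one mildly delicate point is that $S = \Gamma$ can be excluded from the case analysis, since the corresponding function $f$ produced in the proof of Lemma~\ref{l:rigid_old} is identically zero and yields only the trivial equivalence $D \sim D$.
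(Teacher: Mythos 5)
Your proof is correct and follows exactly the route the paper intends: the corollary is stated as an immediate consequence of Lemma~\ref{l:rigid_old}, and your enumeration of the admissible closed sets $S$ (via the clopen-in-$\Gamma\setminus T$ observation) together with the outdegree check is precisely the verification left implicit there. Your remark on excluding $S=\Gamma$ correctly identifies the intended reading of the lemma's criterion.
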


\begin{remark}
For readers who are familiar with the notion of {\em $q$-reducedness}, we remark that $D$ is rigid if and only if it is $q$-reduced for every $q\in \Gamma$.  The only if direction is clear by definition. Conversely, if $D$ is not rigid, choose $f\ne 0$ such that $D+\text{div}(f)$ is effective. Then $D$ is not reduced with respect to any $q$ in the boundary of $\{p\in\Gamma\colon f\text{ is minimized at }p\}$.
\end{remark}

\subsection{Semistable models}\label{subsec:models}
  Throughout this paper, $K$ denotes an algebraically closed field that is complete with respect to a nontrivial nonarchimedean valuation $$\val\col K^* \ra \RR.$$  Let $\Lambda = \val(K^*) \subseteq \RR$ denote the value group of $K$.
Let $R$ denote the valuation ring of $K$, with maximal ideal $m$, and let $k=R/m$  be the residue field. 
For convenience, write
$$S_R =R[x_0,\ldots,x_n], \qquad S_K = K[x_0,\ldots,x_n], \qquad S_k = k[x_0,\ldots,x_n].$$
For $a\in R$, we write $\ov{a}$ for the reduction of $a$, i.e.~the image of $a$ in $k$.  Similarly, we write $\ov{f} \in S_k$ for the coefficientwise reduction of a polynomial $f \in S_R$.  

  Let $X$ be a finite type scheme over $K$.  
By an {\em algebraic model} for $X$ we mean a flat and finite type scheme $\mcx$ over $R$ whose generic fiber is isomorphic to $X$.  
The next lemma reviews how to compute models for subvarieties of projective space.

\begin{lemma}\label{l:model}
Let $I \subset S_K$ be a homogeneous ideal, and let $X = \Proj S_K/I$.  Let $I_R = I \cap S_R$.  Then $\mcx=\Proj S_R/I_R$ is an algebraic model for $X$.

Furthermore, if $I$ is principally generated by a polynomial $f \in S_R$ which does not reduce to 0 in $S_k$, then 
$$I \cap S_R = f S_R.$$
That is, $I \cap S_R$ is again principally generated by $f$.
\end{lemma}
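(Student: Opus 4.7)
The plan is to handle the two assertions separately, starting with the description of $\mathfrak{X}$.

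For the first part, I would verify the three properties. Finite type is immediate since $S_R/I_R$ is a finitely generated $R$-algebra. For flatness, I would use the standard fact that over a valuation ring, a module is flat if and only if it is torsion-free. To see $S_R/I_R$ is torsion-free, suppose $r\in R\setminus\{0\}$ and $g\in S_R$ satisfy $rg\in I_R$. Then $rg\in I$, and since $r\in K^*$ is a unit in $K$, we get $g\in I\cap S_R = I_R$, as desired. For the generic fiber, one has $\mathfrak{X}\times_R K = \Proj\bigl((S_R/I_R)\otimes_R K\bigr) = \Proj\bigl(S_K/(I_R\cdot S_K)\bigr)$, and I would check $I_R \cdot S_K = I$. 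The containment $\subseteq$ is clear, and conversely any homogeneous $h\in I$ has finitely many coefficients in $K$, so multiplying by some $\lambda\in K^*$ of sufficiently large valuation clears denominators: $\lambda h\in I\cap S_R = I_R$, whence $h = \lambda^{-1}(\lambda h) \in I_R\cdot S_K$.

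For the second part, I would show both inclusions of $I\cap S_R = fS_R$. The inclusion $fS_R\subseteq I\cap S_R$ is obvious since $f\in S_R$. For the reverse, take $g\in I\cap S_R$ and write $g=hf$ with $h\in S_K$; I need to verify $h\in S_R$. I would argue by contradiction. If $h\notin S_R$, then some coefficient of $h$ has negative valuation, so there exists $\lambda\in K^*$ with $\val(\lambda)>0$ such that $\lambda h\in S_R$ and at least one coefficient of $\lambda h$ is a unit in $R$. Then $\overline{\lambda h}\neq 0$ in $S_k$. Since $\val(\lambda)>0$, the reduction satisfies $\overline{\lambda g}=0$. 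On the other hand, $\lambda g = (\lambda h)\cdot f$ in $S_R$, so $0 = \overline{\lambda g} = \overline{\lambda h}\cdot \overline{f}$ in $S_k$. But $S_k$ is a polynomial ring over a field, hence an integral domain, and both $\overline{\lambda h}$ and $\overline{f}$ are nonzero by hypothesis. This contradiction forces $h\in S_R$, giving $g\in fS_R$.

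The main (and only really interesting) obstacle is the clearing-denominators rescaling argument in the second part: the crucial input is that reduction is compatible with multiplication and that $S_k$ is a domain. Everything else is a routine verification of flatness and base change. I would also note, as a remark, that the second statement fails without the hypothesis that $\overline{f}\neq 0$: if $f$ has a nonzero power of a uniformizer as a factor, one can clearly divide it out and still land in $I\cap S_R$.
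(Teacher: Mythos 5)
Your proof is correct, and the second assertion is argued exactly as in the paper: there too one writes $g=hf$, rescales $h$ by a scalar $a$ of positive valuation so that $ah\in S_R$ with $\ov{ah}\ne 0$, and derives the contradiction $0=\ov{ag}=\ov{ah}\cdot\ov{f}$ from the fact that $S_k$ is an integral domain. Where you genuinely diverge is the first assertion. The paper does not verify flatness or the generic fiber directly; it quotes Gubler's result that in the affine case $I\cap R[x_1,\ldots,x_n]$ defines a flat model with the correct generic fiber, and then reduces the projective statement to the affine one on the charts $x_i\ne 0$ by checking the compatibility $I|_{x_i=1}\cap R[x_1,\ldots,\hat{x_i},\ldots,x_n]=I_R|_{x_i=1}$. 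Your route---flatness of $S_R/I_R$ from torsion-freeness over a valuation ring, and identification of the generic fiber by clearing denominators to get $I_R\cdot S_K=I$---is self-contained and arguably more transparent; the one step you pass over silently is that flatness of the graded ring descends to the affine charts $D_+(x_i)$ of $\Proj$ (the degree-zero part of a localization of a torsion-free module is again torsion-free), which is indeed routine. What the paper's approach buys is brevity and the avoidance of exactly that chart-level bookkeeping, at the cost of an external citation.
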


\begin{proof}
For a closed subscheme of $\Af^n_K$ with defining ideal $I\subset K[x_1,\ldots,x_n]$, the fact that $I\cap R[x_1,\ldots,x_n]$ defines a flat scheme with general fiber isomorphic to $X$ is shown in \cite[Proposition 4.4]{gubler}.  The first statement then follows by checking it locally on the opens $x_i \ne 0$ \cite[Remark 4.6]{gubler}.  
On such an affine open, the defining ideal of $X$ is $I|_{x_i=1} \subset K[x_1,\ldots,\hat{x_i},\ldots,x_n]$, for which $I|_{x_i}\cap R[x_1,\ldots,\hat{x_i},\ldots,x_n]$ is the equation of a model. But it is routine to check that  $I|_{x_i=1} \cap R[x_1,\ldots,\hat{x_i},\ldots,x_n] = I_R|_{x_i=1},$ so $\Proj S_R/I_R$ gives a model for $X$.

For the second statement, suppose $I$ is generated by $f \in S_R$ with $\ov{f} \ne 0$, and suppose $g\in S_K$ with $fg\in S_R$.  We will show that $g\in S_R$.  Suppose not, and pick some scalar $a\in K$ with positive valuation such that $ag\in S_R$ and $\ov{ag} \ne 0$.  Then $\ov{afg} \ne 0$ since $\ov{f} \ne 0$ and $\ov{ag}\ne 0$.  On the other hand, $\ov{a} = 0$, contradiction.  
\qed \end{proof}

Suppose $X/K$ is a smooth, proper curve of genus $g$.  A {\em semistable model} for $X$ is a proper model $\mcx/R$ whose special fiber $\mcx_k =\mcx \times_R k$ is a nodal curve over $k$.  If $\mcx_k$ is a stable curve, then we say $\mcx$ is a {\em stable model}.  Stable models exist by \cite{bosch-lutkebohmert} when $g\ge 2$. 

To any semistable model $\mcx/R$ for $X$ we associate a tropical curve, which we denote $\Gamma_\mcx$,  as follows.  
The vertices $v_i$ of $\Gamma_\mcx$ are in bijection with the irreducible components $C_i$ of $\mcx$.  For every node $p$ of $\mcx_k$, say lying on components $C_i$ and $C_j$, the completion of the local ring $\mathcal{O}_{\mcx,p}$ is isomorphic to $R[\![x,y]\!]/(xy-\alpha)$ for some $\alpha\in m$, and $\val(\alpha)\in \Lambda_{>0}$ is independent of all choices.  Then we put an edge $e_p$ between $v_i$ and $v_j$ of length $\val(\alpha)$.  Thus $\Gamma_\mcx$ is, by construction, a metric graph with edge lengths in the value group.

\subsection{Classical and tropical theta characteristics}  \label{ss:theta}
Suppose $X$ is a smooth, proper curve over $K$ of genus $g$.  Recall that a {\em theta characteristic} of $X$ is a divisor class $[D]\in \Pic^{g-1}(X)$ such that 
$2[D]=[K_X]$.  It is {\em effective} if $D$ is linearly equivalent to an effective divisor; and it is {\em odd} (respectively {\em even}) if $h^0(\OO_X(D))$ is odd (respectively even).  It is well-known that $X$ has 
exactly $2^{2g}$ theta characteristics, and of these, exactly $(2^g-1)2^{g-1}$ are odd \cite[\S5.1.1]{dolgachev-classical}.

Now when $X$ is a nonhyperelliptic curve of genus $3$, then the effective theta characteristics of $X$ are precisely its odd theta characteristics, of which there are 28.  These are precisely divisor classes representable as $[P+Q]$ such that $2P+2Q\sim K_X$; in other words, $P$ and $Q$ are the pairs of contact points of the 28 bitangent lines to the curve $X$ under its canonical embedding as a smooth plane quartic in $\PP^2$.

Now suppose $\Gamma$ is a metric graph of genus $g$.  A {\em theta characteristic} of $\Gamma$ is a divisor class $[D]\in \Pic^{g-1}\Gamma$ such that $2[D]=[K_\Gamma]$.  Zharkov \cite{zharkov} gives an algorithm for computing theta characteristics of $\Gamma$ and shows that $\Gamma$ has exactly $2^g$ theta characteristics, all but one of which are effective. 
His description of the effective theta characteristics can be reformulated as follows.
The effective theta characteristics of $\Gamma$ are in bijection with the $2^g-1$ nonempty Eulerian subgraphs of $\Gamma$, that is, the subgraphs of $\Gamma$ that have everywhere even valence.  Namely, if $S\subseteq \Gamma$ is an Eulerian subgraph, then the distance function $d(S,-)$ produces an orientation of $\Gamma\setminus S$ in which segments are directed away from $S$. This partial orientation, taken together with a cyclic orientation on $S$, orients $\Gamma$ in its entirety.  Then the divisor $$\sum_{p\in\Gamma} (\val_+(p)-1)\cdot p$$
represents an effective theta characteristic, where $\val_+(p)$ denotes indegree, and these are shown to be pairwise linearly inequivalent.

\subsection{Specialization}  \label{ss:spec}
The theory of specialization of divisors from curves to graphs was developed in \cite{spec}; we recall the relevant facts here, starting with skeletons of Berkovich curves \cite{berk1, ber99, thuillierthesis}.

Let $X/K$ be a smooth, proper curve of genus $g\ge 2$.  
Fix a semistable model $\mcx$ for $X$ and write $\Gamma_\mcx$ for the associated metric graph, as defined in \S\ref{subsec:models}.  There is a canonical embedding of $\Gamma_\mcx$ in the Berkovich analytification $\Xan$ of $X$, with the property that $\Xan$ admits a retraction
$$\tau\col \Xan \ra \Gamma_\mcx.$$
We call $\Gamma_\mcx$ a {\em skeleton} for $X$.  If $g\ge 2$ then $X$ has a {\em minimal skeleton} $\Gamma_\mcx$, corresponding to a stable model $\mcx$ for $X$ \cite[\S4.16]{bpr}.  

Write $\Gamma=\Gamma_\mcx$ when the dependence on the model is clear.  Now, there is a natural inclusion $X(K) \hookrightarrow X^{an}$, and the composition $X(K)\hookrightarrow X^ {an} \twoheadrightarrow \Gamma$ induces, by linearity, a {\em specialization map}
$$\tau_*\col \Div(X) \rightarrow \Div (\Gamma).$$
The specialization map is, by construction, a degree-preserving group homomorphism that sends effective divisors to effective divisors.  It also sends principal divisors to principal divisors \cite{thuillierthesis} (see also \cite[Theorem 5.15]{bpr}) and so descends to a map $\Pic(X) \rightarrow \Pic(\Gamma)$ that we will also denote $\tau_*$ by slight abuse of notation.  
\begin{observation}
The specialization map $\tau_*$ takes effective theta characteristics of $X$ to effective theta characteristics of $\Gamma.$  
\end{observation}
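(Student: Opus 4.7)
The plan is to observe that the statement follows almost formally once one unpacks the definitions, using the three structural properties of $\tau_*$ recalled in the excerpt (degree-preserving, group homomorphism on $\Pic$, sends effective divisors to effective divisors), together with one nontrivial fact: that the canonical divisor class tropicalizes to the canonical divisor class, i.e.\ $\tau_*([K_X]) = [K_\Gamma]$.

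More precisely, let $[D] \in \Pic^{g-1}(X)$ be an effective theta characteristic, so $2[D] = [K_X]$ and $D$ is linearly equivalent to some effective divisor $E$ on $X$. Since $\tau_*$ descends to a group homomorphism $\Pic(X) \to \Pic(\Gamma)$, applying $\tau_*$ to the identity $2[D] = [K_X]$ yields $2\tau_*([D]) = \tau_*([K_X]) = [K_\Gamma]$. Degree-preservation gives $\deg \tau_*(D) = g-1$, so $\tau_*([D]) \in \Pic^{g-1}(\Gamma)$. Finally, $\tau_*(D) \sim \tau_*(E)$, and $\tau_*(E)$ is effective because $\tau_*$ sends effective divisors to effective divisors, so $\tau_*([D])$ is effective. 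Thus $\tau_*([D])$ satisfies the three defining properties of an effective theta characteristic of $\Gamma$.

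The only step requiring justification beyond the properties already recorded is the claim $\tau_*([K_X]) = [K_\Gamma]$. I would handle this by invoking the construction of $\Gamma_\mcx$ from \S\ref{subsec:models} together with the adjunction/dualizing sheaf description of $K_X$ on a semistable model: at each node of $\mcx_k$ lying on components $C_i$ and $C_j$, the relative dualizing sheaf contributes suitably, and passing to $\Gamma$ via $\tau$ recovers $K_\Gamma = \sum_{p\in\Gamma}(\deg(p)-2)\cdot p$. Alternatively, one may simply cite \cite{spec}, where this identity is a basic ingredient of the specialization theorem.

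The main (and really only) obstacle is this identification of canonical classes; once granted, the argument is formal. No further work is needed to control effectiveness or degree, since those are built into the definition of $\tau_*$ as recalled in \S\ref{ss:spec}.
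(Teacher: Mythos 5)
Your argument is correct and is essentially the paper's proof: the paper likewise reduces the whole statement to the single nontrivial fact that the canonical class of $X$ specializes to the canonical class of $\Gamma$, citing \cite[Lemma 4.19]{spec}, with the remaining steps being the same formal manipulation of the properties of $\tau_*$ that you spell out. Your write-up simply makes the formal bookkeeping explicit where the paper leaves it implicit.
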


\begin{proof}
This follows from the fact that the canonical class on $X$ specializes to the canonical class on $\Gamma$, which is proved in \cite[Lemma 4.19]{spec}.
\qed \end{proof}

\subsection{Tropicalizations}  

Suppose $V\subseteq (K^*)^n$ is a subvariety of an algebraic torus.  The {\em tropicalization} of $V$ is the closure (in the usual Euclidean topology on $\RR^n$) of the set
$$\{(\val(x_1), \ldots, \val(x_n))~|~(x_1,\ldots,x_n)\in V\} \subseteq \RR^n;$$
this set can be equipped with the structure of a polyhedral complex and positive integer multiplicities on top-dimensional faces so that the result is a balanced complex of dimension equal to the dimension of $V$.  See \cite{ms} for details.  

Now if $V\subseteq \PP^n$, then we may consider the torus $T=\{(x_0:\ldots:x_{n-1}:1)\} \subset\PP^n$; then by abuse of notation, when we refer to the tropicalization of $V$, we shall simply mean the tropicalization of $V\cap T$, i.e.~the part inside the torus.  

For example, consider a line $Ax+By+z = 0$ in $\PP^2$, with $A,B\in K^*$.  The  tropicalization of this line is the 1-dimensional polyhedral complex consisting of a point at $(-\val(A),-\val(B)) \in \RR^2$, and three rays from that point in directions $(1,0)$, $(0,1)$, and $(-1,-1)$.  In this situation we say that $(-\val(A),-\val(B))$ is the {\em center} of the tropical line.

\section{Plane quartics in $K_4$-form}\label{s:k4}

Let $X=V(f) \subset\PP^2$ be a smooth quartic curve over $K$.  After scaling, we may assume that the minimum valuation of the 15 coefficients of $f$ is zero.  Then by Lemma~\ref{l:model}, the quartic polynomial $f \in R[x,y,z]$ defines an algebraic model for $X$ that we will denote $\mcx.$  Now we give the main definition of the section.

\begin{definition}\label{d:k4}
We say that a smooth quartic $X = V(f)$ is in {\em $K_4$-form} if the special fiber of the model $\mcx$ for $X$ defined above is $$xyz(x+y+z)=0.$$ 
\end{definition}

The following theorem characterizes the curves that have embeddings in $K_4$ form.
\begin{theorem}\label{t:k4}
Let $X/K$ be a smooth, proper curve of genus 3.   The following are equivalent:

\begin{enumerate}
\item $X$ has an embedding as a smooth plane quartic in $K_4$-form. \label{it:form}
\item $X$ has an embedding as a smooth plane quartic whose tropicalization in $\RR^2$ has a strong deformation retract to a metric $K_4$. \label{it:def}
\item The minimal skeleton of $X$ is a metric $K_4$. \label{it:abs}
\end{enumerate}
\noindent If these equivalent conditions hold, we will say that $X$ is a {\em $K_4$-curve}.
\end{theorem}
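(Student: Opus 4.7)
The plan is to prove $(1) \Leftrightarrow (3)$ via a direct analysis of stable models, and to interpolate $(2)$ using the Newton subdivision of a $K_4$-form quartic together with the known relationship between the bounded part of an embedded tropicalization and the abstract minimal skeleton.

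For $(1) \Rightarrow (3)$: If $X = V(f)$ is in $K_4$-form, then by Lemma~\ref{l:model} the model $\mcx$ has special fiber $\mcx_k = V(xyz(x+y+z))$. The four lines $x=0$, $y=0$, $z=0$, $x+y+z=0$ are in general position in $\PP^2_k$---one checks directly that no three are concurrent---so they meet in exactly $\binom{4}{2}=6$ distinct points, all nodes of $\mcx_k$. Thus $\mcx_k$ is a stable nodal curve with four $\PP^1$-components and six nodes, having $K_4$ as dual graph. A local computation at each node $p$, where the completion of $\mathcal{O}_{\mcx,p}$ has the form $R[\![u,v]\!]/(uv-\alpha)$ for some $\alpha \in m$, shows that $\mcx$ is semistable; the construction of Section~\ref{subsec:models} then identifies the associated skeleton as a metric $K_4$, which is therefore the minimal skeleton of $X$.

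For $(3) \Rightarrow (1)$: Assume the minimal skeleton of $X$ is a metric $K_4$, and let $\mcx$ be the stable model, so $\mcx_k = C_1 \cup \cdots \cup C_4$ with each $C_i \cong \PP^1_k$ meeting the other three transversally at three nodes. First verify that $X$ is not hyperelliptic: the tropical gonality of $K_4$ is $3$ by a Baker--Norine computation, so specialization gives $\operatorname{gon}(X) \geq 3$. The canonical morphism $\phi \col X \hookrightarrow \PP^2_K$ is thus a closed embedding as a smooth plane quartic; extend it via the relative dualizing sheaf $\omega_{\mcx/R}$ to a morphism $\wt\phi \col \mcx \to \PP^2_R$. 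Since $\omega_{\mcx/R}|_{C_i}$ has degree $-2 + 3 = 1$, each $C_i$ maps isomorphically to a line $L_i \subset \PP^2_k$. By flatness of the scheme-theoretic image, its special fiber is a plane quartic of arithmetic genus $3$, which forces the four lines $L_i$ to be distinct and in general position. A projective transformation in $\operatorname{PGL}_3(k)$, lifted to $\operatorname{PGL}_3(R)$, sends the $L_i$ to the coordinate lines and $x+y+z=0$, giving an embedding of $X$ in $K_4$-form.

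For the equivalence with $(2)$: Using $(1)$, observe that in the $K_4$-form embedding the only monomials of $f$ with coefficient of valuation $0$ are $x^2yz, xy^2z, xyz^2$, so the central triangle with vertices $(2,1), (1,2), (1,1)$ is necessarily a face of the Newton subdivision of $f$. The dual of this face is a trivalent vertex $v_0$ of $\Trop X$, and a case analysis over the possible completions of the Newton subdivision (parametrized by the valuations of the remaining twelve coefficients) shows that the bounded part of $\Trop X$ always has first Betti number $3$ and admits a strong deformation retract onto a metric $K_4$ containing $v_0$; this yields $(2)$. Conversely, if some embedding of $X$ has tropicalization retracting onto a metric $K_4$, then by the Slope Formula of \cite{bpr} the metric $K_4$ recovered from the retract agrees with the minimal skeleton of $X$, giving $(3)$. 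The main obstacle is the step in $(3) \Rightarrow (1)$ that identifies $\wt\phi(\mcx_k)$ as a union of four distinct lines in general position: ruling out pathological degenerations (two components collapsing to the same line, or three lines becoming concurrent) requires controlling the scheme-theoretic image of the extended canonical morphism, combining the arithmetic-genus constraint $p_a = 3$ with the stability of $\mcx$. The Newton-subdivision case analysis in $(1) \Rightarrow (2)$, while computational, is routine once the central triangle is identified as a face.
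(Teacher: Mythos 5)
Your treatment of $(1)\Leftrightarrow(3)$ and of $(1)\Rightarrow(2)$ follows essentially the same route as the paper: the special fiber of the model from Lemma~\ref{l:model} is the four general-position lines with dual graph $K_4$; conversely the extension of the canonical embedding via the relative dualizing sheaf sends each rational component of the stable model to a line, and general position is forced; and the middle triangle $\conv\{(1,1),(2,1),(1,2)\}$ being a face of the Newton subdivision drives the topology of $\Trop X$. (For the last point the paper avoids your ``case analysis over the remaining twelve coefficients'' by a softer argument: smoothness of $V(f)$ forces the Newton polygon to reach each corner region, so $T$ is interior, the dual vertex attaches to a surviving cycle along three edges, and $b_1(\Trop X)\le 3$ because there are only three interior lattice points.)

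The genuine gap is your implication $(2)\Rightarrow(3)$. You assert that if some embedded tropicalization strongly deformation retracts onto a metric $K_4$, then ``by the Slope Formula the metric $K_4$ recovered from the retract agrees with the minimal skeleton.'' The Slope Formula does not give this, and the underlying claim that cycles of $\Trop X$ must be seen by the skeleton is false in general: tropicalizations can acquire \emph{hidden} cycles that do not come from $H_1$ of the skeleton, the standard example being an elliptic curve with good reduction whose honeycomb tropicalization has a loop even though its minimal skeleton is a single weighted point (see \cite{chan-sturmfels}, \cite{bpr}). So $b_1(\Trop X)=3$ does not by itself preclude a smaller skeleton with positive vertex weights, and nothing in your argument certifies that the three visible cycles are faithful. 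The paper closes this loop differently: it proves $(2)\Rightarrow(1)$ by duality --- the three bounded complementary regions force the interior vertices $(1,1),(2,1),(1,2)$ of $\Newt(\tilde f)$, the three separating edges force $T$ to be a face of $\Delta(\tilde f)$, and a diagonal change of coordinates makes $T$ the unique lowest face, so the model of Lemma~\ref{l:model} has special fiber $xyz(ax+by+cz)$ --- and only then deduces $(3)$ from the explicit semistable model. You should replace your $(2)\Rightarrow(3)$ step by this $(2)\Rightarrow(1)$ argument (or otherwise produce a semistable model certifying the skeleton); as written, the cycle of implications is not closed.
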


We emphasize the differences between these three statements: statement (1) is an assertion about the existence of a certain algebraic model for $X$, as in Definition~\ref{d:k4}.  Statement (2) is an assertion about the existence of a tropicalization satisfying a topological criterion.  Finally, statement (3) is an intrinsic statement about $X$: it is an assertion about the topology of its minimal Berkovich skeleton.

\begin{proof}[Proof of Theorem~\ref{t:k4}]

\eqref{it:def}$\Rightarrow$\eqref{it:form}: Suppose $X$ has a plane embedding as a smooth plane quartic $C = V(f) \subset \PP^2_K$ whose tropicalization $\Trop(C)$ strongly deformation retracts to a $K_4$.  Then $\RR^2\setminus \Trop(C)$ has exactly three bounded regions, and each pair of bounded regions are separated by an edge of $\Trop(C)$.  

Let $\tilde{f} = f|_{z=1}$ be the dehomogenization, and write $\tilde{f} = \sum c_{ij} x^i y^j$.  
Let $\Newt(\tilde{f})$ denote the Newton polygon of $\tilde{f}$, i.e.~the convex hull of the lattice points $\{(i,j)\in\ZZ^2: c_{ij}\ne 0\}$.  So $\Newt(\tilde{f})$ is a subpolytope of the triangle $\conv \{(0,0),(4,0),(0,4)\}$.  

Consider the {\em Newton subdivision} $\Delta(\tilde{f})$ associated to $\tilde{f}$, by which we mean the subdivision of $\Newt(\tilde{f})$ obtained by projecting to $\RR ^2$ the lower faces of the polytope
$$P(\tilde{f}) = \conv \{(i,j, \val(c_{ij}))\} \subset \RR^3.$$  Then 
$\Delta(\tilde{f})$ is a polyhedral complex that is dual to $\TropV(\tilde{f})$ \cite[Proposition 3.1.6]{ms}.  In particular, the
three bounded regions of $\RR^2 \setminus \Trop(C)$ are in correspondence with interior vertices of $\Newt(\tilde{f})$, so these interior vertices are necessarily 
$(1,1),(2,1),$ and $(1,2)$. 

Furthermore, there are three edges of $\Trop(C)$ separating each pair of bounded regions of $\RR^2 \setminus \Trop(C)$.  Therefore, there must be an edge joining each pair of interior vertices in $\Delta(\tilde{f})$.  This shows that the triangle
\begin{equation}\label{eq:middletriangle}
T = \conv \{ (1,1),(2,1),(1,2) \}
\end{equation}
is a face of $\Delta(\tilde{f})$.  
Then, after applying a suitable projective transformation to $f$, we may assume 
that the triangle $T$ is the unique lowest face of $P(\tilde{f})$.  In other words, 
we may assume that all coefficients of $f$ have nonnegative valuation, and that the 
coefficients of valuation 0 are precisely $c_{11}, c_{21},$ and $c_{12}$.  It 
follows from Lemma~\ref{l:model} that $f$ defines a model for $X$ whose 
special fiber is of the form $ax^2 y z + bxy^2 z + c x y z^2 = xyz(ax+by+cz)=0$, 
for scalars $a,b,c\in k^*$.  Rescaling, we may assume $a=b=c=1.$

\eqref{it:form}$\Rightarrow$\eqref{it:abs}: 
A plane embedding of $X$ in $K_4$-form gives, by definition, a stable model $\mcx$ of $X$ whose skeleton is a metric $K_4$.

\eqref{it:abs}$\Rightarrow$\eqref{it:def}:  Let $\mcx$ be a stable model for $X$; then $\mcx_k$ has irreducible components $C_1,C_2,C_3,$ and $C_4$, each isomorphic to $\PP^1_k$, and intersecting pairwise.  
Let $\Gamma$ denote the minimal skeleton of $X$.
Let $\Omega^1_{\mcx/R}$ be the pushforward to $\mcx$ of the sheaf of relative K\"ahler differentials on the smooth locus of $\mcx$ over $R$. Then by \cite[Lemma 2.11]{krzb}, $\Omega^1_{\mcx/R}$ is a line bundle whose restriction to $X$ is the canonical bundle, while on $\mcx_k$ it restricts to the relative dualizing sheaf.  
Now, $X$ is not hyperelliptic, since for example $\Gamma$ is not hyperelliptic \cite{cha2} whereas hyperellipticity is preserved under passing to the skeleton \cite{spec}.  So the general fiber of $\mcx$ is canonically embedded as a smooth plane quartic over $K$, and the special fiber is also embedded as a stable curve in $\PP^2_k$.  The restriction of $\Omega^1_{\mcx/R}$ to each $C_i$ is isomorphic to $\omega_{C_i}(p^i_{1} + p^i_{2} + p^i_{3}) \cong \mathcal{O}_{C_i}(1)$, where $p^i_{1} , p^i_{2} ,$ and $p^i_{3}$ denote the nodes on $C_i$.  So $\mcx_k$ is the union of four lines in $\PP^2_k$, no three of which are concurrent.

All such quadruples of lines are projectively equivalent, so after a change of coordinates, we may assume they are the lines $$x =0,~y=0,~z=0,\quad \text{ and }\quad x+y+z=0.$$  Thus we may assume that $\mcx$ is defined by a homogeneous quartic polynomial $f\in R[x,y,z]$, and $\mcx_k = V(x^2yz + xy^2z + xyz^2)$.  From this it follows that the triangle $T$ defined in~\eqref{eq:middletriangle} is a face of the Newton subdivision $\Delta(\tilde{f})$, where $\tilde{f} = f|_{z=1}$ is the dehomogenization of $f$ and $\Delta(\tilde{f})$ is the subdivision of its Newton polygon $\Newt(f)$ defined above.

Let $v$ denote the vertex of $\Trop V(f)$ dual to $T$.  Now, $V(f)$ is smooth, so in particular it does not contain the coordinate lines $x=0, y=0,$ or $z=0$ as a component.  It follows that $\Newt(\tilde{f})$ meets the lines $i=0, j=0,$ and $i+j=4$.  
Next, $f$ has at least one of the three terms $x^4, x^3y,$ and $x^3z$ in its support, for otherwise $V(f)$ would be singular at $(1:0:0)$.  Thus 
$\Newt(\tilde{f})$ contains at least one of the points $(4,0), (3,1)$, or $(3,0)$.  
Similarly, $\Newt(\tilde{f})$ contains at least one of the points $(0,4), (1,3)$ or 
$(0,3)$, and at least one of the points $(0,0), (1,0)$, or $(0,1)$.  

Then $T$ lies in the interior of $\Newt(\tilde{f})$.  
It follows that when $v$ is deleted from $\Trop V(f)$, a cycle remains.  Furthermore, $v$ must be attached to that cycle along the three edges of $\Trop V(f)$ dual to the three edges of $T$.  Thus $\Trop V(f)$ contains a metric $K_4$ inside its bounded subcomplex; and it clearly has the homotopy type of a $K_4$, since the rank of $H_1(\Trop V(f), \ZZ)$ cannot exceed the number of interior lattice points of $\Newt(\tilde{f})$, which is exactly 3.
\qed \end{proof}

\begin{remark}
Theorem~\ref{t:k4} is quite special to the case of the complete graph on four vertices.  For example, not all genus 3 metric graphs can even be realized as the subcomplex of the dual complex to a Newton subdivision of a quartic, as in \cite[Proposition 2.3]{bitangent}.

We also emphasize:
we are not claiming that the embedding of $X \cong C \subset \PP^2_K$ may be chosen so that $\Trop(C)$ is tropically smooth.  In fact, \cite{bjms} gives 
inequalities on the edge lengths of a metric $K_4$ that are necessary and sufficient for it to be embeddable in $\RR^2$ as part of a tropically smooth plane quartic.
\end{remark}

\begin{remark}
In light of Theorem~\ref{t:k4}, we pose the following algorithmic question. Suppose $f=\sum c_{ijk}x^iy^jz^k$ is a smooth plane quartic.  How can we tell whether $f$ defines a $K_4$-curve; and if it does, how can one read off its six edge lengths? In principle, and in full generality, one could attempt to compute a semistable model and local equations for the nodes in the special fiber, as in \S\ref{ss:spec}.  But Theorem~\ref{t:k4} opens up the possibility of finding a more explicit algorithm, using tropical techniques,  in the special case of $K_4$-curves.  Indeed, the theorem shows that being a $K_4$-curve is equivalent to having a projective reembedding in $\PP^2_K$ whose Newton subdivision contains the triangle $T$ in~\eqref{eq:middletriangle}, and this property may be encoded explicitly as a system of inequalities on the valuations of the 15 coefficients defining a quartic curve.

The general algorithmic question of computing the abstract tropical curve, i.e.~minimal Berkovich skeleton, associated to a nonarchimedean curve is an interesting one.  See \cite{bolognese-brandt-chua} for more on the status of this problem as well as its relationship with computing tropical Jacobians.
\end{remark}

\begin{proposition}\label{p:groupsof4}
Suppose $C = V(f) \subset \PP^2_K$ is a smooth quartic in $K_4$-form.  Suppose the minimum valuation of the coefficients of $f$ is 0, so that $f$ defines a model $\mathcal{C}/R$ for $C$.  
Consider the 28 bitangents $l_1,\ldots,l_{28}$ of $C$, and let $L_1,\ldots,L_{28}$ 
$\subset \RR^2 $ denote their tropicalizations.  Let $P_1,\ldots,P_{28}\in \RR^2$ be the centers of the 28 tropical lines $L_i$.  Then 
\begin{enumerate}
\item Four of the $P_i$'s lie in the region $\{(a,b): a>0, a>b\}$.
\item Four of the $P_i$'s lie in the region $\{(a,b):b>0, b>a\}$.
\item Four of the $P_i$'s lie in the region $\{(a,b):a<0, b<0\}.$
\item Four of the $P_i$'s are $(0,0)$.
\item Four of the $P_i$'s lie in the region $\{(a,a):a >0\}$.
\item Four of the $P_i$'s lie in the region $\{(0,-a):a >0\}$.
\item Four of the $P_i$'s lie in the region $\{(-a,0):a >0\}$.
\end{enumerate}
\end{proposition}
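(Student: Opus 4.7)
The plan is to read off the region of each tropical center $P_i$ directly from the reduction $\bar l_i \subset \PP^2_k$ of the bitangent $l_i$ to the special fiber $\mcx_k = V(xyz(x+y+z))$. Write $l_i$ as $A_i x + B_i y + C_i z = 0$ with $\min(\val A_i,\val B_i,\val C_i) = 0$, so that $\bar A_i, \bar B_i, \bar C_i \in k$ are well-defined and not all zero. The tropical center is
$$P_i = (\val C_i - \val A_i,\ \val C_i - \val B_i),$$
so its location depends only on which of the three valuations attain the minimum $0$, equivalently on which of $\bar A_i, \bar B_i, \bar C_i$ are nonzero. A short case analysis of the seven possible nonvanishing patterns matches them bijectively with the seven regions in the proposition: the three ``one-nonzero'' patterns correspond to $\bar l_i$ being one of the coordinate lines $\{x=0\}, \{y=0\}, \{z=0\}$ and place $P_i$ in the outer regions (1), (2), (3); all three nonzero yields $P_i = (0,0)$, i.e. region (4); and the three ``two-nonzero'' patterns ($\bar C_i = 0$; $\bar B_i = 0$; $\bar A_i = 0$) place $P_i$ on the three open rays of regions (5), (6), (7), respectively.

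The substantive step is to identify which reductions $\bar l_i$ actually occur. This is exactly the content of the Caporaso-Sernesi computation \cite[\S 3.4]{cs} of the 28 limits of bitangents in a smoothing of a union of four lines in general position in $\PP^2$: in the coordinates provided by $K_4$-form, their result states that the 28 reductions consist of exactly 4 copies each of 7 distinct lines, namely the 4 components $\{x=0\}, \{y=0\}, \{z=0\}, \{x+y+z=0\}$ of $\mcx_k$ together with the 3 ``diagonal'' lines $\{y+z=0\}, \{x+z=0\}, \{x+y=0\}$ through the three pairs of nodes of $\mcx_k$ not lying on a common component. Matching these 7 limit lines against the 7 nonvanishing patterns of the previous paragraph produces exactly the $4+4+4+4+4+4+4$ distribution across the seven regions required by the proposition; for instance, $\{x+y+z=0\}$ has all three coefficients nonzero (region (4)), while $\{x+y=0\}$ has $\bar C_i = 0$ but $\bar A_i, \bar B_i \neq 0$ (region (5)).

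The main obstacle is the input from classical algebraic geometry: the tropical-center-to-region bijection in the first paragraph is a routine unpacking of the definitions, but the $4+4+4+4+4+4+4$ splitting of the 28 bitangent limits among the 7 special lines is the substantive classical ingredient and must be imported from \cite[\S 3.4]{cs}. The only care-point in invoking it is to verify that the $K_4$-form normalization of Definition~\ref{d:k4} has the effect of fixing the coordinates so that the seven Caporaso-Sernesi limit lines are exactly the seven lines listed above; this is immediate from the explicit form $\mcx_k = V(xyz(x+y+z))$.
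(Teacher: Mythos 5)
Your proof is correct and follows essentially the same route as the paper's: normalize the bitangent coefficients to have minimum valuation zero, invoke the Caporaso--Sernesi computation that the 28 bitangents reduce to the seven lines $x, y, z, x+y+z, x+y, x+z, y+z$ with multiplicity four each, and translate each reduction's pattern of nonvanishing coefficients into the corresponding region for the tropical center $(\val C_i - \val A_i,\ \val C_i - \val B_i)$. The paper works out one representative case ($x=0$) and declares the rest analogous, whereas you organize the same computation as a case analysis over the seven nonvanishing patterns; the content is identical.
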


Figure~\ref{fig:sectors} illustrates the regions of $\RR^2$ corresponding to the seven cases of Proposition~\ref{p:groupsof4}.

\begin{figure}
\begin{center}
\scalebox{1.25}{
\begin{tikzpicture}[my_node/.style={fill, circle, inner sep=1.75pt}, scale=.9]
\begin{scope}
\fill[gray!30] (-2,-2)--(-2,1.5)--(1.5,1.5) -- (1.5,-2)--cycle;
\node[my_node, label=right:{$\scriptstyle (0,0)$}] (V) at (0,0){};
\node[inv, label=45:$\scriptstyle y+z$] (W) at (-2,0){};
\node[inv] (NE) at (1.5,1.5){};
\node[inv, label=185:$\scriptstyle x+y$] at (1.25,1.5){};
\node[inv, label=100:$\scriptstyle x+z$](S) at (0,-2){};
\draw[->, ultra thick] (V) --  (W);
\draw[->, ultra thick] (V) --  (NE);
\draw[->, ultra thick] (V) --  (S);
\node[inv, label=190:$\scriptstyle x+y+z$] at (.1,0){}; 
\node[inv, label=right:$\scriptstyle x$] at (.5,-.75){};
\node[inv, label=135:$\scriptstyle y$] at (-1,.75){};
\node[inv, label=225:$\scriptstyle z$] at (-1,-.75){};
\end{scope}
\end{tikzpicture}
}
\caption{The seven regions of $\RR^2$ in Proposition~\ref{p:groupsof4}; Each region supports four centers of the 28 tropicalized bitangents, and their limiting equations are as shown.} \label{fig:sectors}
\end{center}
\end{figure}
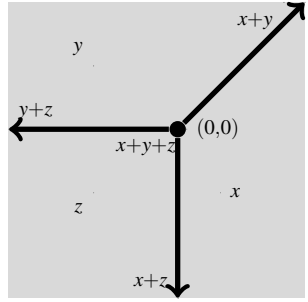

\begin{proof}
Write $l_i = V(\alpha_i x + \beta_i y + \gamma_i z)$ with $\alpha_i,\beta_i,\gamma_i \in R$ and such that the minimum valuation of $\alpha, \beta, $ and $\gamma$ is $0$.  
Then by Lemma~\ref{l:model}, the special fiber of $l_i$ is the line $\ov{\alpha_i}x + \ov{\beta_i}y + \ov{\gamma_i}z = 0 \subset \PP^2_k$.  Now, as shown by Caporaso-Sernesi \cite[3.4.11 and Lemma 2.3.1]{cs}, the 28 bitangents $l_i$ limit to the seven lines 
$$x,\quad y,\quad z,\quad x+y+z,\quad  x+y,\quad  x+z,\quad  y+z,$$ each with multiplicity four.  
  By this we mean that the closure over the space of all plane quartics of the incidence variety of bitangents over smooth quartics is flat and the fiber over the singular quartic $xyz(x+y+z)$ is as claimed.
(The lines $x+y, x+z,$ and $y+z$ are simply the three additional lines that are spanned by the six points of pairwise intersection of $x,y,z,$ and $x+y+z$.)

Consider the four lines $l_i=V(\alpha_i x + \beta_i y + \gamma_i z)$ with limit $x=0$. We have $\val(\alpha_i) = 0$ and $\val(\beta_i), \val(\gamma_i) > 0$.  Now the center $P_i$ of the tropicalized line $L_i$ is at $(-\val(\alpha_i/\gamma_i), -\val(\beta_i/\gamma_i)) \in \RR^2$, so it follows that $P_i = (a,b)$ where $a>0$ and $a>b$.  The other six cases of Proposition~\ref{p:groupsof4} can be argued analogously. 
\qed \end{proof}

Now we prove the main theorem of the section.
\begin{theorem}\label{t:groupsof4}
Let $X$ be a genus 3 smooth, proper curve over $K$ whose minimal skeleton $\Gamma$ is a metric $K_4$.  The 28 odd theta characteristics of $X$ are sent to the seven odd theta characteristics of $\Gamma$ in groups of four.
\end{theorem}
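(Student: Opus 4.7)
The plan is to build on Proposition~\ref{p:groupsof4}, which partitions the 28 bitangents of a $K_4$-form plane embedding $C = V(f) \subset \PP^2_K$ of $X$ into seven groups of four, indexed by the seven regions of Figure~\ref{fig:sectors}. Since the specialization map sends effective theta characteristics to effective theta characteristics (by the observation in Section~\ref{ss:spec}), and $\Gamma$ has exactly seven effective theta characteristics by Zharkov's count, it suffices to prove two statements: (a) the four bitangents in any single region have contact divisors specializing to a common class in $\Pic(\Gamma)$, and (b) the resulting seven classes are pairwise distinct. Together these force a bijection between the seven regions and the seven effective theta characteristics of $\Gamma$.

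For each region, I would fix a representative bitangent $\ell = V(\alpha x + \beta y + \gamma z)$ with center in that region and use tropical intersection theory~\cite{osserman-payne, or} to compute the stable intersection $\Trop(\ell)\cdot \Trop(C)$ as a degree-$4$ divisor on $\Trop(C)$. Since the classical intersection $\ell \cap C$ equals the bitangent contact divisor $2P + 2Q$ and tropicalization is compatible with proper intersection in this setup, the stable intersection is forced to equal $2p + 2q$ for points $p, q \in \Trop(C)$ that are the tropicalizations of $P$ and $Q$. Composing with the retraction $\Trop(C) \to \Gamma$ (whose target embeds in the bounded subcomplex of $\Trop(C)$ as a metric $K_4$ by Theorem~\ref{t:k4}) then produces an effective divisor of degree $2$ on $\Gamma$ representing $\tau_*([P+Q])$. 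For step (a), the four bitangents in a fixed region may have slightly different centers and stable intersections, but I expect their retracted divisors on $\Gamma$ to differ only by principal divisors; the nonarchimedean Slope Formula of~\cite{bpr} should detect these equivalences by controlling how the relevant rational functions specialize along $\Gamma$. For step (b), I would match the seven computed classes against Zharkov's enumeration of the effective theta characteristics of $K_4$, indexed by the seven Eulerian subgraphs (the four triangles and three Hamiltonian $4$-cycles), and verify that each region corresponds to a distinct Eulerian subgraph.

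The main obstacle is step (a): showing that all four bitangents within a single region yield the same divisor class on $\Gamma$. The difficulty is that the four classical contact divisors live on $X$, their specializations are divisor classes on $\Gamma$, and the linear equivalences relating them are not directly visible from tropical intersection data alone. A disciplined case analysis, one region at a time, using the Slope Formula to follow the contact points along edges of $\Trop(C)$ as $(\alpha : \beta : \gamma)$ varies within the region and invoking the Caporaso-Sernesi limiting bitangent lines~\cite[\S3.4]{cs} to pin down the leading terms of $\alpha, \beta, \gamma$, should close this gap. Step (b) is then a comparatively mechanical verification against Zharkov's list.
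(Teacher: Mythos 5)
Your strategy---fix a single $K_4$-form embedding, sort the $28$ tropicalized bitangents into the seven regions of Proposition~\ref{p:groupsof4}, and then argue region-by-region that the contact divisors specialize to seven distinct classes---is genuinely different from the paper's, and the difference is exactly where your acknowledged gap sits. The paper never works with one fixed embedding. For \emph{each} effective theta characteristic $[P+Q]$ of $\Gamma$ it builds a \emph{new} embedding adapted to that characteristic: two tropical rational functions $F,G$ on $\Gamma$ are chosen whose divisors involve $2p_z+2q_z$ with $[p_z+q_z]=[P+Q]$, the relevant degree-$4$ divisors are lifted \emph{pointwise} to canonical divisors on $X$ via rigidity (Corollary~\ref{c:canonical-lift}), and the Slope Formula then guarantees that the resulting plane quartic tropicalizes to the prescribed image of $\Gamma$, with the two rays in direction $(-1,-1)$ attached precisely at $p_z$ and $q_z$. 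Only with that control does the Osserman--Rabinoff argument force the four bitangents of Proposition~\ref{p:groupsof4}(3) to have both contact points retracting to $p_z$ and $q_z$; proving ``at least four'' for each of the seven characteristics and counting $7\times 4 = 28$ finishes. Your proposal omits the lifting step entirely: Theorem~\ref{t:k4} supplies \emph{some} $K_4$-form embedding, but by itself it does not identify the bounded $K_4$ inside $\Trop(C)$ isometrically with $\Gamma$, nor tell you where on $\Gamma$ the unbounded rays attach, and without that dictionary a location in $\RR^2$ cannot be converted into a divisor class on $\Gamma$.

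The concrete failure is your step (a) in the regions where the tropical intersection is not localized. For the four bitangents centered at the origin (region (4)), the tropical line shares its vertex with the central vertex of $\Trop(C)$ and its rays can overlap edges of $\Trop(C)$; the stable intersection points then need not be the tropicalizations of the actual contact points, since Osserman--Rabinoff only confines the contact points to the connected components of $\Trop(\ell)\cap\Trop(C)$, which here can be large. In regions (1)--(3) the centers range over open two-dimensional sectors, and in a fixed embedding nothing forces the contact points of all four bitangents in one sector to retract to a \emph{common} pair of points of $\Gamma$, let alone identifies which of Zharkov's seven classes results. The Slope Formula will not ``detect the linear equivalences'' for you: it is a statement about $-\log|f|$ for a specific nonzero rational function on $X$, and in your setup you have no candidate function in hand. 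The paper sidesteps all of this by re-choosing the embedding so that the characteristic under study always lands in a region (namely (3), or (5)--(7)) where the positional analysis is clean, and by deducing ``exactly four'' from counting rather than from a direct distinctness argument like your step (b).
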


\begin{proof}
Given an effective theta characteristic $[P+Q]$ on $\Gamma$, we will show that at least four effective theta characteristics on $X$ specialize to it.  Since $X$ has 28 effective theta characteristics, it follows immediately that {\em exactly} four of them specialize to each effective theta characteristic on $\Gamma$.

We will start by using two tropical rational functions on $\Gamma$ as coordinate functions to produce an embedding of $\Gamma$ in $\RR^2$ that can be completed to a balanced tropical curve.  Then we will argue that this embedding can be lifted to a canonical embedding of $X\subset \PP^2_K$ as a plane quartic in $K_4$ form.  Finally, we will use Proposition~\ref{p:groupsof4} to find four bitangents to $X$ whose contact points with $X$ specialize to $P$ and $Q$ on $\Gamma$.  

Let $V_1,V_2,V_3,$ and $V_4$ denote the vertices of $\Gamma$, and let $E_{ij}$ denote the edge between $V_i$ and $V_j$.  Let $a,b,c,d,e,f$ denote the lengths of the edges $E_{12},E_{13},E_{14},E_{34},E_{24},E_{23}$ respectively.  Now by  \cite{zharkov} (see \S\ref{ss:theta}), the seven effective theta characteristics of $\Gamma$ are in bijection with the seven nonempty Eulerian subgraphs of $\Gamma$, namely, the four 3-cycles and the three 4-cycles in $\Gamma$.
We consider these two cases in turn.

Suppose $[P+Q]$ corresponds to a 3-cycle; after relabeling, the cycle is $V_1V_2V_3$.  After permuting $V_1,V_2,$ and $V_3$, we may assume that $c\le d,e$.  Let $x=\min(a,e,f)$ and $y=\min(b,d,f)$.  We now describe an embedding of $\Gamma$ in $\RR^2$ such that the coordinate functions are tropical rational functions on~$\Gamma,$ i.e.~they are continuous, piecewise-linear, and balanced.

Put $V_1, V_2, V_3, V_4$ at $(0,0), (x,0), (0,y), (-c,-c)$, respectively. Embed the edge $E_{14}$ in $\mathbb{R}^2$ as a straight line segment between $V_1$ and $V_4$. To embed $E_{12}$, if $a=\min(a,e,f)$, then we simply join $V_1$ and $V_2$ with a straight line segment.  Otherwise, we embed $E_{12}$ as a backtracking path $(0,0) \rightarrow (\frac{a+x}{2},0) \rightarrow (x,0)$. (Thus $E_{12}$ will eventually contribute multiplicity 2 to the segment between $(x,0)$ and $(\frac{a+x}{2},0)$ in the image of this embedding in $\RR^2$.)
Similarly, we embed $E_{13}$ by either joining directly $(0,0)$ with $(0,y)$, if $b = \min(b,d,f)$, or as a backtracking path $(0,0) \rightarrow (0, \frac{b+y}{2}) \rightarrow (0,y)$ otherwise. 

Finally, we embed the edges $E_{23}, E_{34}$, and $E_{24}$ in a piecewise-linear manner. We embed $E_{23}$  as $(x,0) \rightarrow \left(\frac{f+x}{2}, \frac{f-x}{2}\right) \rightarrow \left( \frac{f-y}{2}, \frac{f+y}{2} \right) \rightarrow (0,y)$, $E_{34}$ as $(0,y) \rightarrow \left( \frac{y-d}{2}, y\right) \rightarrow \left( \frac{-c-d}{2}, -c \right) \rightarrow (-c,-c)$, and $E_{24}$ as $(-c,-c) \rightarrow \left( -c, \frac{-c-e}{2}\right) \rightarrow \left( x, \frac{x-e}{2} \right) \rightarrow (x,0)$.  See Figure~\ref{f:embedding}.

Note that the $x$-coordinate of this embedding is a rational function $F$ on $\Gamma$ with
\begin{equation}\label{eq:divF}
\text{div}~F = 2p_z + 2q_z - 2p_x - 2q_x.
\end{equation}
The points $p_z, q_z\in \Gamma$ are the points mapping to $(\frac{-c-f}{2},-c)$ and $(-c,\frac{-c-e}{2})$.  As for $p_x$ and $q_x$, our description of where they are on $\Gamma$  depends on which of $a,e,f$ is smallest.
If $\min(a,e,f)=a$, then $p_x$ and $q_x$ are the points of $\Gamma$ mapping to $(\frac{f+a}{2}, \frac{f-a}{2})$ and $(a,\frac{a-e}{2})$.  If $\min(a,e,f)=e,$ then $p_x$ and $q_x$ map to $(\frac{f+a}{2},\frac{f-a}{2})$ and $(\frac{a+e}{2}, 0)$.  If $\min(a,e,f)=f,$ then $p_x$ and $q_x$ map to $(\frac{a+f}{2}, 0)$ and $(f,\frac{f-e}{2})$.    

Furthermore, we claim that $2p_z + 2q_z \sim K_\Gamma$.  Indeed, let $S\subset \Gamma$ be the cycle $V_1V_2V_3$ and consider the distance function $d(S,-)\col \Gamma \rightarrow \RR$ as a rational function on $\Gamma$.  Then
$$\text{div } d(S,-) = 2p_z + 2q_z - V_1 - V_2 - V_3 - V_4 = 2p_z + 2q_z - K_\Gamma.$$

Similarly, the $y$-coordinate of the embedding of $\Gamma$ described above is a rational function $G$ with
$$\text{div } G = 2p_z + 2q_z - 2p_y - 2q_y,$$ and so $2p_y + 2q_y$ is also in the canonical class of $\Gamma$.  Thus the image of $\Gamma$ under $(F,G)$ can be made into a tropical plane curve in $\RR^2$ by adding six infinite rays, each of multiplicity two.  Namely, we add two rays in direction $(1,0)$ at $p_x$ and $q_x$; two rays in direction $(0,1)$ at $p_y$ and $q_y$, and two in direction $(-1,-1)$ at $p_z$ and $q_z.$

Now, all of the edge lengths $a,b,c,d,e,f$ lie in the value group $\Lambda$, so each point $p_x,q_x,p_y,q_y,p_z,q_z$ is a $\Lambda$-rational point of $\Gamma.$  (Note that $\Lambda$ is divisible since $K$ is algebraically closed.)
Furthermore, each divisor
$$D_0 = 2p_x+2q_x \qquad D_1 = 2p_y + 2q_y \qquad D_z = 2p_z+2q_z$$
has a rigid subdivisor of degree 2.  Indeed, for $i$ being $x$, $y$, or $z$, if $p_i$ is a vertex of $\Gamma$ then $2p_i$ is rigid by Corollary~\ref{l:rigid}(1), and similarly for $q_i$.  If neither $p_i$ nor $q_i$ is a vertex of $\Gamma$, then they are on the interiors of distinct edges in $\Gamma$, and $p_i+q_i$ is again rigid by Corollary~\ref{l:rigid}(2).  Then by Corollary~\ref{c:canonical-lift}, which we postpone to the end of this section, the divisors $D_0,D_1,D_2$ can be lifted pointwise to canonical divisors $\tilde{D}_0,\tilde{D}_1,\tilde{D}_2$ on $X$.  Pick global sections $s_0,s_1,s_2\in H^0(K_X)$ with zeroes $\tilde{D}_0,\tilde{D}_1,\tilde{D}_2$ respectively.  Then the Slope Formula for tropical curves (\cite[Theorem 5.15]{bpr}, \cite[Proposition 3.3.15]{thuillierthesis}) implies that the embedding $(s_0,s_1, s_2)\col X \hookrightarrow\PP^2$ is, up to a shift in $\RR^2$,  a lift of $(F,G)\col\Gamma\rightarrow\RR^2$.  This shift can be corrected by rescaling coordinates on $\PP^2$.  Note that by Theorem~\ref{t:k4}, $X$ is canonically embedded in $\PP^2$ as a smooth plane quartic in $K_4$ form.

By Proposition~\ref{p:groupsof4}(3), there are four bitangents of $X$ whose tropicalizations $L_1,L_2,L_3,L_4$ are centered in the open southwest quadrant of $\RR^2.$  Let $C_i=(a_i,b_i)\in\RR^2$ denote the center of $L_i$.  We claim $C_i$ cannot lie to the left of the segments 
$(x,0)\rightarrow (x,\frac{x-e}{2})\ra (-c,\frac{-c-e}{2})$.  For if it did, then the rightward ray of $L_i$ would intersect $\Trop(X)$ in a point on one of these segments with stable intersection multiplicity 1, but this contradicts the main theorem of Osserman-Rabinoff \cite{or} along with the fact that each bitangent meets $X$ in two points of multiplicity 2 (or one point of multiplicity 4).  

Similarly, $C_i$ cannot lie below the segment $(0,y)\ra (\frac{y-f}{2},y) \ra (\frac{-c-f}{2}, -c).$  It follows that $a_i\le \frac{-c-f}{2}$ and $b_i \le\frac{-c-e}{2}$, and that $L_i$ intersects $\Trop(X)$ at two points, each with multiplicity 2, which retract to $p_z, q_z\in \Gamma$.  This is precisely the theta characteristic associated to the cycle $V_1V_2V_3$.  By symmetry, we have therefore proved the main claim for the four effective theta characteristics of $\Gamma$ corresponding under Zharkov's bijection to the four 3-cycles in $\Gamma$; that is, we have proved that at least four effective theta characteristics of $X$ specialize to each of these effective theta characteristic of $\Gamma$.

Finally, a similar argument using the embedding of $\Gamma$ produced above shows the claim for the three effective theta characteristics of $\Gamma$ that correspond to 4-cycles in $\Gamma$.  For example, by Proposition~\ref{p:groupsof4}(7), there are four bitangents of $X$ whose tropicalizations are tropical lines centered on the open ray $\{(\alpha,0):\alpha<0\}$.  Furthermore, it must be the case that the centers lie to the left of the segment $(\frac{y-f}{2},y)\ra (\frac{-c-f}{2},-c)$, again by \cite{or}.  This means that the two bitangent contact points tropicalize to the midpoint of $E_{34}$ and somewhere on the edge $E_{12}$; i.e.~it must be the case that those four theta characteristics on $X$  specialize to the theta characteristic associated to the cycle $V_1V_3V_2V_4$.  The other two cycles of length 4 can be argued similarly, using Propsition~\ref{p:groupsof4}(5) and (6).

\qed \end{proof}

The only part of the proof of Theorem~\ref{t:groupsof4} that was postponed is Corollary~\ref{c:canonical-lift} below, which we used to lift $(F,G)$ to a canonical embedding.  In fact, that result is an immediate consequence of the next lemma, which gives general conditions under which a divisor can be lifted pointwise.  The idea in this lemma, using rigidity to lift tropical divisors, is reminiscent of arguments that have appeared in \cite[\S 6]{jensen-payne}.

If $E$ and $E'$ are effective divisors, say that $E'$ is a {\em subdivisor} of $E$ if $E-E'$ is again effective.

\begin{lemma}\label{l:lift}
Let $X/K$ be a smooth, proper curve of genus $g\ge 2$ and let $\Gamma$ denote a skeleton for $X$.  Suppose $[\tilde{E}]\in \Pic^d(X)$ is a divisor on $X$ with $h^0(\mathcal{O}_X(\tilde{E})) \ge r+1$, and let $[E]=\tau_*([\tilde{E}])\in \Pic^d(\Gamma).$

Suppose $D\in [E]$ is an effective divisor on $\Gamma$ such that the support of $D$ is $\Lambda$-rational, and suppose that $D$ has a rigid subdivisor of degree $d-r$ (as defined in \S\ref{ss:graphs}).  Then $D$ lifts pointwise to some  $\tilde{D}\in [\tilde{E}]$.  In other words, there exists a divisor 
$\tilde{D} = \tilde{p}_1 + \cdots+\tilde{p}_d \in [\tilde{E}]$
with $\tau(\tilde{p}_i )=p_i$.  
\end{lemma}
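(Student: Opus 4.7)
The plan is to exploit the $r$-dimensional freedom in $|\tilde{E}|$ to pin down an effective representative whose specialization equals $D$, using rigidity to control the unfree part. Decompose $D = D' + D''$, where $D' = p_1 + \cdots + p_{d-r}$ is the rigid subdivisor of degree $d-r$ guaranteed by hypothesis and $D'' = q_1 + \cdots + q_r$ is effective of degree $r$. Every point in the support of $D$ is $\Lambda$-rational, and since $K$ is algebraically closed the retraction $\tau \colon X(K) \to \Gamma$ is surjective onto $\Lambda$-rational points with infinite fibers; so I can pick (distinct) lifts $\tilde{q}_1, \ldots, \tilde{q}_r \in X(K)$ with $\tau(\tilde{q}_j) = q_j$.

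Next, consider the linear subsystem of $|\tilde{E}|$ cut out by requiring the effective representatives to pass through $\tilde{q}_1, \ldots, \tilde{q}_r$. Since each simple base point drops the dimension by at most one,
\[
h^0\bigl(\mathcal{O}_X(\tilde{E} - \tilde{q}_1 - \cdots - \tilde{q}_r)\bigr) \;\ge\; (r+1) - r \;=\; 1,
\]
so there is an effective divisor $\tilde{D} \in [\tilde{E}]$ of the form $\tilde{D} = \tilde{q}_1 + \cdots + \tilde{q}_r + \tilde{R}$, with $\tilde{R}$ effective of degree $d-r$. It remains to show that $\tilde{R}$ is itself a pointwise lift of $D'$.

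For this I specialize. Since $\tau_*$ is a group homomorphism that preserves both effectivity and linear equivalence (as recalled in $\S$\ref{ss:spec}),
\[
D'' + \tau_*(\tilde{R}) \;=\; \tau_*(\tilde{D}) \;\sim\; \tau_*(\tilde{E}) \;=\; E \;\sim\; D \;=\; D'' + D',
\]
so $\tau_*(\tilde{R})$ is an effective divisor on $\Gamma$ linearly equivalent to $D'$. Rigidity of $D'$ then forces $\tau_*(\tilde{R}) = D'$ as a divisor, not merely up to linear equivalence. Writing $\tilde{R} = \tilde{p}'_1 + \cdots + \tilde{p}'_{d-r}$, this equality of effective divisors means that after reindexing, $\tau(\tilde{p}'_i) = p_i$ for each $i$, so $\tilde{D} = \tilde{q}_1 + \cdots + \tilde{q}_r + \tilde{p}'_1 + \cdots + \tilde{p}'_{d-r}$ is a pointwise lift of $D'' + D' = D$, as required.

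The genuine content of the argument is the passage from linear equivalence on $\Gamma$ to pointwise agreement, which is exactly what rigidity of $D'$ buys us; everything else is essentially formal once one grants the standard facts that $\tau|_{X(K)}$ surjects onto $\Lambda$-rational points with infinite fibers, and that $\tau_*$ is compatible with linear equivalence. I do not expect any serious obstacle beyond quoting those standard inputs.
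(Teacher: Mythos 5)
Your proposal is correct and follows essentially the same route as the paper's own proof: lift the non-rigid degree-$r$ part arbitrarily (using surjectivity of $\tau$ onto $\Lambda$-rational points), use $h^0 \ge r+1$ to complete to an effective divisor in $[\tilde{E}]$, and then invoke rigidity of the degree-$(d-r)$ subdivisor together with compatibility of $\tau_*$ with linear equivalence to force the remaining part to specialize pointwise to $D'$. The only cosmetic difference is notational (which half of $D$ you name first), so there is nothing further to add.
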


\begin{proof}
Write $D=p_1+\cdots+p_d \in [E]$, and suppose $p_{r+1}+\cdots+p_d$ is rigid.  By \cite[\S2.3]{spec} the retraction map $\tau\col X(K)\rightarrow \Gamma$ is surjective on $\Lambda$-rational points, so we may pick arbitrary lifts $\tilde{p}_1,\ldots,\tilde{p}_r\in X(K)$ for $p_1,\ldots,p_r$.  Now since $h^0(\mathcal{O}_X(\tilde{E}))>r$, there exist $\tilde{q}_{r+1},\ldots,\tilde{q_d}$ such that 
$$\tilde{D} := \tilde{p}_1 + \cdots + \tilde{p}_r + \tilde{q}_{r+1}+\cdots+\tilde{q_{d}} \in [\tilde{E}].$$
Let $q_i = \tau(\tilde{q}_i)$.  Then $\tau_*(\tilde{D}) = p_1+\cdots+p_r + q_{r+1}+\cdots+q_d \sim D$, so 
$$q_{r+1}+\cdots+q_d \sim p_{r+1}+\cdots+p_d.$$
But $p_{r+1}+\cdots+p_d$ was assumed to be rigid, so $\tau_*(\tilde{D}) = D.$
\qed \end{proof}

\begin{corollary}\label{c:canonical-lift}
If $D\in [K_\Gamma]$ is an effective and $\Lambda$-rational divisor on a metric graph $\Gamma$ of genus $g$, and $D$ has a rigid subdivisor of degree $g-1$, then $D$ lifts pointwise to a canonical divisor on $X$.
\end{corollary}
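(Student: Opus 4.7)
The plan is to apply Lemma \ref{l:lift} with $\tilde{E} = K_X$, and show that its numerical hypotheses match those of the corollary under the choice $d = 2g-2$ and $r = g-1$. Since Lemma \ref{l:lift} already contains the pointwise lifting mechanism (using rigidity to pin down the components of the lifted divisor that are not absorbed by the sectional freedom in $H^0(\mathcal{O}_X(\tilde{E}))$), the corollary reduces to a bookkeeping check that these hypotheses line up.

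First, I would unpack the numerics: the canonical divisor $K_X$ has degree $2g-2$, and the Riemann--Roch theorem on the smooth, proper curve $X$ of genus $g$ gives $h^0(\mathcal{O}_X(K_X)) = g$. Thus we take $r = g-1$ in Lemma \ref{l:lift}, and the required degree of the rigid subdivisor of $D$ becomes $d-r = (2g-2)-(g-1) = g-1$, which is exactly what is assumed on $D$ in the corollary.

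Second, I need to identify $\tau_*([K_X])$ with $[K_\Gamma]$, so that the hypothesis $D \in [K_\Gamma]$ of the corollary is the same as the hypothesis $D \in [E]$ with $E = \tau_*(K_X)$ required by the lemma. This equality is precisely the specialization of the canonical class recorded in the observation at the end of \S\ref{ss:spec}, which was proved as \cite[Lemma~4.19]{spec}. With the numerical condition and this class-theoretic identification both in place, Lemma \ref{l:lift} applies directly and outputs a pointwise lift $\tilde{D} \in [K_X]$, which is the canonical divisor promised by the corollary. I do not expect any real obstacle beyond this matching of hypotheses; all the substantive content (the use of rigidity, surjectivity of $\tau$ on $\Lambda$-rational points, and the comparison of linear equivalences) has already been carried out in Lemma \ref{l:lift}.
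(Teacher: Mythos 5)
Your proposal is correct and matches the paper's intent exactly: the paper states the corollary as an immediate consequence of Lemma~\ref{l:lift}, obtained precisely by taking $\tilde{E}=K_X$, $d=2g-2$, $r=g-1$ (via $h^0(\mathcal{O}_X(K_X))=g$), and invoking the specialization of the canonical class to identify $\tau_*([K_X])$ with $[K_\Gamma]$. No differences worth noting.
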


\begin{remark}
We remark that the results of Lemma~\ref{l:lift} and Corollary~\ref{c:canonical-lift} do not follow from the main lifting criterion established in \cite[Theorem 1.1]{baker-rabinoff}. Here, our rigidity assumptions allow us to achieve a {\em pointwise lift}, whereas the lift in \cite{baker-rabinoff} may involve up to $g$ extra pairs of zeroes and poles which cancel under specialization.  Nor do our lifting results follow directly from Mikhalkin's correspondence results \cite{mikhalkin-enumeration}, since it is important for us to be able to lift the given tropical plane quartic curve {\em together with a tropical canonical embedding} of it to an algebraic curve.
\end{remark}

\begin{figure}
\begin{center}
\scalebox{1}{
\begin{tikzpicture}[my_node/.style={fill, circle, inner sep=1.75pt}, scale=.8]
\begin{scope}
\node[my_node, label=270:{$\scriptstyle (0,0)$}] (V1) at (0,0){};
\node[my_node, label=right:{$\scriptstyle (x,0)$}] (V2) at (2,0){};
\node[my_node, label=90:{$\scriptstyle (0,y)$}] (V3) at (0,1){};
\node[my_node, label=30:{$\scriptstyle (\frac{f+x}{2},\frac{f-x}{2})$}] (V23) at (2.5,0.5){};
\node[my_node, label=0:{$\scriptstyle (\frac{f-y}{2},\frac{f+y}{2})$}] (V32) at (1,2){};
\node[my_node, label=0:{$\scriptstyle (-c,-c)$}] (V4) at (-1,-1){};
\node[my_node, label=180:{$\scriptstyle (\frac{-c-d}{2},-c)$}] (V43) at (-2,-1){};
\node[my_node, label=180:{$\scriptstyle (\frac{y-d}{2},y)$}] (V34) at (-1,1){};
\node[my_node, label=270:{$\scriptstyle (x,\frac{x-e}{2})$}] (V24) at (2,-1.25){};
\node[my_node, label=0:{$\scriptstyle (-c,\frac{-c-e}{2})$}] (V42) at (-1,-2.75){};
\draw[->, ultra thick] (V23) --  (3.5,0.5);
\draw[->, ultra thick] (V32) --  (1,2.75);
\draw[->, ultra thick] (V24) --  (3.5,-1.25);
\draw[->, ultra thick] (V42) --  (-1.75,-3.5);
\draw[->, ultra thick] (V43) --  (-3,-2);
\draw[->, ultra thick] (V34) --  (-1,2.75);
\draw[-, ultra thick] (V1) --  (V2);
\draw[-, ultra thick] (V2) --  (V23);
\draw[-, ultra thick] (V23) --  (V32);
\draw[-, ultra thick] (V32) --  (V3);
\draw[-, ultra thick] (V3) --  (V1);
\draw[-, ultra thick] (V34) --  (V3);
\draw[-, ultra thick] (V4) --  (V1);
\draw[-, ultra thick] (V4) --  (V43);
\draw[-, ultra thick] (V43) --  (V34);
\draw[-, ultra thick] (V2) --  (V24);
\draw[-, ultra thick] (V24) --  (V42);
\draw[-, ultra thick] (V42) --  (V4);
\end{scope}
\begin{scope}[shift={(0,-7)}]
\node[my_node, label=270:{$\scriptstyle (0,0)$}] (V1) at (0,0){};
\node[my_node, label=90:{$\scriptstyle (x,0)$}] (V2) at (2,0){};
\node[my_node, label=270:{$\scriptstyle (\frac{a+x}{2},0)$}] (V22) at (2.5,0){};
\node[my_node, label=90:{$\scriptstyle (0,y)$}] (V3) at (0,1){};
\node[my_node, label=30:{$\scriptstyle (\frac{f+x}{2},\frac{f-x}{2})$}] (V23) at (2.5,0.5){};
\node[my_node, label=0:{$\scriptstyle (\frac{f-y}{2},\frac{f+y}{2})$}] (V32) at (1,2){};
\node[my_node, label=0:{$\scriptstyle (-c,-c)$}] (V4) at (-1,-1){};
\node[my_node, label=180:{$\scriptstyle (\frac{-c-d}{2},-c)$}] (V43) at (-2,-1){};
\node[my_node, label=180:{$\scriptstyle (\frac{y-d}{2},y)$}] (V34) at (-1,1){};
\node[my_node] (V24) at (2,0){};
\node[my_node, label=0:{$\scriptstyle (-c,\frac{-c-e}{2})$}] (V42) at (-1,-1.5){};
\draw[->, ultra thick] (V23) --  (3.5,0.5);
\draw[->, ultra thick] (V32) --  (1,2.75);
\draw[->, ultra thick] (V22) --  (3.5,0);
\draw[->, ultra thick] (V42) --  (-1.75,-2.25);
\draw[->, ultra thick] (V43) --  (-3,-2);
\draw[->, ultra thick] (V34) --  (-1,2.75);
\draw[-, ultra thick] (0,0.05) --  (2.5,0.05);
\draw[-, ultra thick] (V2) --  (V23);
\draw[-, ultra thick] (V23) --  (V32);
\draw[-, ultra thick] (V32) --  (V3);
\draw[-, ultra thick] (V3) --  (V1);
\draw[-, ultra thick] (V34) --  (V3);
\draw[-, ultra thick] (V4) --  (V1);
\draw[-, ultra thick] (V4) --  (V43);
\draw[-, ultra thick] (V43) --  (V34);
\draw[-, ultra thick] (V2) --  (V24);
\draw[-, ultra thick] (V24) --  (V42);
\draw[-, ultra thick] (V42) --  (V4);
\draw[-, ultra thick] (V2) --  (V22);
\end{scope}
\end{tikzpicture}
}
\caption{An embedding of $\Gamma$ using tropical rational coordinate functions as in the proof of Theorem~\ref{t:groupsof4}.  The six infinite rays have multiplicity 2; the bounded segments have multiplicity 1.  On the top, we depict the case $\min(a,e,f)=a$ and $\min(b,d,f)=b$.  On the bottom, we depict the case $\min(a,e,f)=e$ and $\min(b,d,f)=b$.} \label{f:embedding}
\end{center} 
\end{figure}
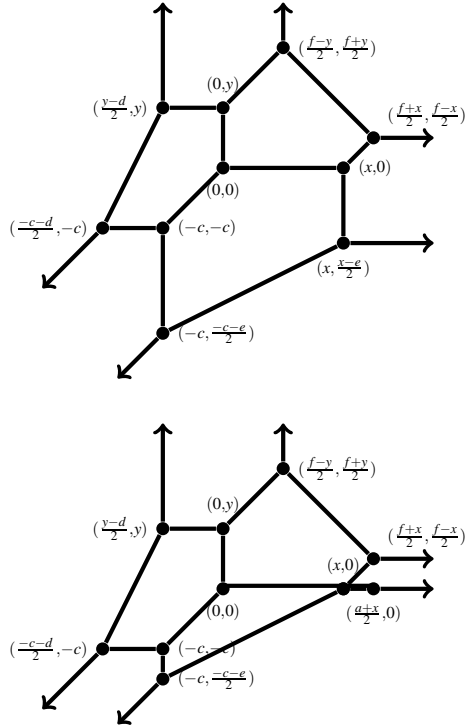

\section{The $28$ bitangents of honeycomb quartics}\label{s:honeycomb}

Let $X=V(f)\subset \PP^2_K$ be a smooth plane quartic curve.  Following \cite{knutson-tao}, we say that $X$ is a {\em honeycomb curve} if the regular subdivision of the triangle $\Delta_4 = \conv\{(0,0),(4,0),$ $(0,4)\}$ induced by $f$ is the standard one, i.e.~obtained by slicing $\Delta_4$ by the lines $x=i, y=i, x+y=i$ for $i=1,2,3$.  See Figure~\ref{fig:hexagonal}.

Honeycomb curves play a special role in tropical geometry \cite{chan-sturmfels, speyer-honeycomb}.  In this section, we will determine almost exactly where the 28 bitangents of honeycomb quartic curves go under tropicalization.  In fact, our description is exact for {\em generic} honeycomb curves, in a sense that we will define below and which is specific to this paper.  In the final section of the paper, we will give an interesting explicit calculation of the 28 bitangents of a honeycomb plane quartic over $\pui$, giving an idea of the behavior that can arise in the non-generic case.  The tools we use in this section are tropical intersection theory \cite{osserman-payne, or}, the census of bitangents and their limits provided in \cite{cs}, and the elementary geometry of tropical plane curves.

The curve $\Trop X$ divides $\RR^2$ into 15 regions, one for each lattice point in $\Delta_4$.  Let $R_{ij}$ denote the region corresponding to the lattice point $(i,j)$.  See Figure~\ref{fig:hexagonal}.  Thus there are three bounded hexagonal regions $R_{11}, R_{12}, R_{21}$. Call their unique common vertex $O$.  If $R_{ij}$ and $R_{kl}$ are adjacent regions, write $E_{ij,kl}$ for the unique edge that they share, and write $\ell({E_{ij,kl}})$ for the line spanned by $E_{ij,kl}$.

\begin{figure}
\begin{center}
\scalebox{1}{
\begin{tikzpicture}[my_node/.style={fill, circle, inner sep=1.75pt}, scale=.9]
\begin{scope}[shift={(0,-2)}]
\draw[-, ultra thick] (0,0)--(4,0)--(0,4)--(0,0);
\draw[-, ultra thick] (0,1)--(3,1)--(3,0)--(0,3)--(1,3)--(1,0)--(0,1);
\draw[-, ultra thick] (0,2)--(2,2)--(2,0)--cycle;
\end{scope}
\begin{scope}[shift={(9,0)}]
\draw[-,ultra thick, gray!30] (0,0.5)--(0,0.8)--(-.7,.8)--(-1,.5);
\draw[->,ultra thick, gray!30] (-.7,.8)--(-.7,2.5);
\draw[-,ultra thick, gray!30] (0,0.8)--(1.2,2)--(1.5,2);
\draw[->,ultra thick, gray!30] (1.2,2)--(1.2,3);
\draw[-,ultra thick ] (0, 0.5) -- (0,0) -- (2,0) -- (2.5,.5)-- (2.5,2) -- (3,2.5) -- (2.5,2) -- (1.5,2) -- cycle;
\draw[->,ultra thick] (2.5,.5) -- (3.5,.5);
\draw[->,ultra thick] (3,2.5) -- (3.5,2.5);
\draw[->,ultra thick] (3,2.5) -- (3,3);
\draw[->,ultra thick] (1.5,2) -- (1.5,3);
\draw[-,ultra thick ] (0, 0.5) -- (-1,0.5) -- (-1.5,0) -- (-1.5,-.5) -- (-.5,-.5) -- (0,0);
\draw[-,ultra thick] (-1.5,0) -- (-2.5,0);
\draw[->,ultra thick] (-2.5,0) -- (-3,-.5);
\draw[->,ultra thick] (-2.5,0) -- (-2.5,1.5);
\draw[->,ultra thick] (-1,0.5) -- (-1,2.5);
\draw[->,ultra thick] (-1.5,-0.5) -- (-2,-1);
\draw[-,ultra thick ] (-.5,-.5)--(-.5,-1) -- (1.75,-1)--(1.75, -1.5)--(1.75,-1)--(2,-.75)--(2,0);
\draw[->,ultra thick] (1.75, -1.5) -- (2.75,-1.5);
\draw[->,ultra thick] (1.75, -1.5) -- (1,-2.25);
\draw[->,ultra thick] (-.5,-1) -- (-1,-1.5);
\draw[->,ultra thick] (2,-.75) -- (3,-.75);
\node at (1.5,1){$\scriptstyle 11$};
\node at (.75,-.5){$\scriptstyle 12$};
\node at (-.75,0){$\scriptstyle 21$};
\node at (3.4,2.9) {$\scriptstyle 00$};
\node at (3.25,1.5) {$\scriptstyle 01$};
\node at (3,-.2) {$\scriptstyle 02$};
\node at (2.5, -1.1) {$\scriptstyle 03$};
\node at (2,-2) {$\scriptstyle 04$};
\node at (.6, -1.5 ) {$\scriptstyle 13$};
\node at (-1.1,-1) {$\scriptstyle 22$};
\node at (-2.1,-.5) {$\scriptstyle 31$};
\node at (-3,0.5) {$\scriptstyle 40$};
\node at (-1.75,1) {$\scriptstyle 30$};
\node at (0,1.5) {$\scriptstyle 20$};
\node at (2,2.5) {$\scriptstyle 10$};
\end{scope}
\end{tikzpicture}
}
\caption{A tropical honeycomb plane quartic $\Trop X \subset \RR^2$ and the subdivision of $\Delta_4$ to which it is dual.  The regions in the complement of $\Trop X$ are in bijection with the lattice points in $\Delta_4$ as labeled. (The curve shown in grey illustrates Remark~\ref{r:nochange}.)} \label{fig:hexagonal}
\end{center}
\end{figure}
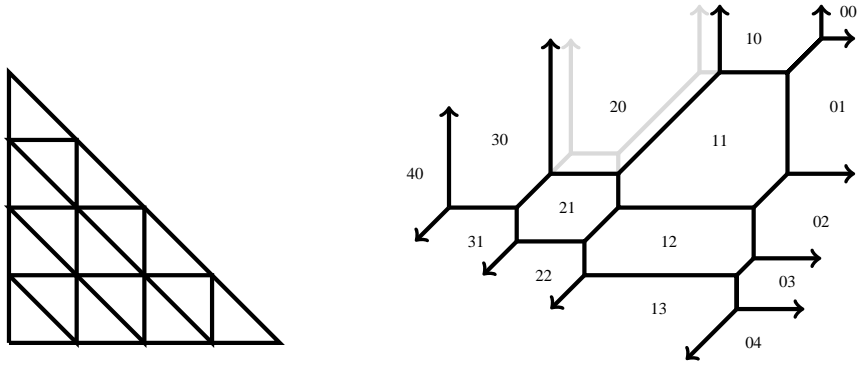

Now, the line $\ell({E_{11,12}})$ intersects the boundary of $R_{21}$ in two points: $O$ and another point, which we call $S_x$. Similarly, the line $\ell({E_{11,21}})$ intersects the boundary of $R_{12}$ at $O$ and another point $S_y$; and the line $\ell({E_{12,21}})$ intersects the boundary of $R_{11}$ at $O$ and another point $S_z$.  Finally, let $T_x = \ell({E_{01,11}} )\cap \ell( {E_{03,12}}) \in \RR^2$, let $T_y = \ell( {E_{10,11}}) \cap \ell({E_{30,21}})$, and let $T_z = \ell({E_{21,31}}) \cap \ell({E_{12,13}})$.  See Figure~\ref{fig:generic honeycomb bitangents}.

\begin{definition}\label{d:generic}
Let $X = V(f)$ be a honeycomb curve, where 
$$f(x,y,z) = 
\sum_{i+j\le 4} c_{ij} x^iy^jz^{4-i-j}.$$
Let $a_{ij} = \val ( c_{ij}).$ 
We say $X$ is a {\em generic} honeycomb curve if the following hold:
\begin{eqnarray*}
a_{31}+a_{11}-a_{30}-a_{12} &\ne & 0,\\
a_{03}+a_{21}-a_{13}-a_{11} &\ne & 0,\\
a_{10}+a_{12}-a_{01}-a_{21} &\ne & 0.
\end{eqnarray*}
\end{definition}

\begin{theorem}\label{t:hexagonal}
Let $X=V(f) \subset \PP^2_K$ be a honeycomb plane quartic curve.  Consider the 28 bitangents of $X$ and their tropicalizations $L_1,\ldots,L_{28}$.  Let $P_i\in \RR^2$ denote the center of the tropical line $L_i$.  Then:
\begin{enumerate}
	\item Four of the $P_i$'s are $T_x$,
	\item Four of the $P_i$'s are $T_y$,
	\item Four of the $P_i$'s are $T_z$,
	\item Four of the $P_i$'s are $O$,
	\item Four of the $P_i$'s lie on the closed ray in direction $(1,1)$ based at $S_z$,
	\item Four of the $P_i$'s lie on the closed ray in direction $(-1,0)$ based at $S_x$,
	\item Four of the $P_i$'s lie on the closed ray in direction $(0,-1)$ based at $S_y$.
\end{enumerate}
Moreover, if $X$ is a generic honeycomb curve, then the points in (5), (6), and (7) must be exactly $S_z,S_x,$ and $S_y$, respectively.
\end{theorem}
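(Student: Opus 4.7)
The plan is to bootstrap from Proposition~\ref{p:groupsof4} and then refine each coarse location using tropical intersection theory. First I would observe that any honeycomb quartic is a $K_4$-curve: the standard Newton subdivision of $\Delta_4$ contains the central triangle $T=\conv\{(1,1),(2,1),(1,2)\}$ as a face, so after rescaling $f$ so that $c_{11},c_{21},c_{12}$ have valuation zero, the argument of Theorem~\ref{t:k4}\eqref{it:def}$\Rightarrow$\eqref{it:form} places $X$ in $K_4$-form. Proposition~\ref{p:groupsof4} then partitions the 28 tropicalized bitangent centers $P_i$ into seven groups of four, lying in the seven regions of Figure~\ref{fig:sectors}, translated so that the origin is placed at $O$. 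Up to an obvious relabeling, these seven regions match the seven loci listed in Theorem~\ref{t:hexagonal}.

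The next step is to pin down the centers more precisely using the tropical intersection theory of Osserman--Payne \cite{osserman-payne} and Osserman--Rabinoff \cite{or}. Each bitangent $l_i$ contacts $X$ with multiplicities summing to $4$, organized as $(2,2)$ or $(4)$, so the tropical line $L_i$ must meet $\Trop X$ in a stable intersection compatible with this pattern. Since $L_i$ consists of three rays in directions $(1,0)$, $(0,1)$, $(-1,-1)$ based at $P_i$, the problem reduces to checking, ray by ray, which edges of the honeycomb $\Trop X$ each ray of $L_i$ crosses, and confirming that the resulting tropical multiplicities are consistent with the bitangent contact structure.

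For the seven cases I would argue as follows. Case (4) is immediate: for any $P_i$ other than $O$ in the central bounded region, one ray of $L_i$ would cross a weight-one edge of $\Trop X$ transversely, contradicting the even contact multiplicity required at that contact; hence $P_i=O$. For cases (1)--(3), the center lies in an open sector as in Proposition~\ref{p:groupsof4}, and the stable intersection constraint forces the rays of $L_i$ to overlap with edges of $\Trop X$, which happens uniquely at the triple-intersection points $T_x$, $T_y$, $T_z$. For cases (5)--(7), the center lies on a specific ray out of $O$, and a ray-by-ray analysis rules out positions strictly between $O$ and the respective base points $S_z$, $S_x$, $S_y$, yielding the closed-ray conclusion. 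Under the generic conditions of Definition~\ref{d:generic}, the three numerical non-vanishings additionally rule out positions strictly beyond $S_z$, $S_x$, $S_y$, since each condition encodes precisely the non-degeneracy that would otherwise allow such a slide outward along the ray.

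The main obstacle will be the detailed tropical intersection bookkeeping in cases (5)--(7) and in the generic pin-down: for each candidate center $P_i$, one must identify exactly which edges of the honeycomb $\Trop X$ each ray of $L_i$ crosses, compute the stable intersection multiplicities from the honeycomb combinatorics, and match these against the $(2,2)$-or-$(4)$ bitangent contact pattern carefully enough to exclude every location except the claimed ones. In particular, translating the three valuation conditions in Definition~\ref{d:generic} into geometric non-collinearity statements about the relevant edges of $\Trop X$ is the essential step that drives the generic conclusion.
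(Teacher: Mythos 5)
Your proposal is correct and follows essentially the same route as the paper: reduce to Proposition~\ref{p:groupsof4} (after translating so the central vertex is $O$), then use the Osserman--Rabinoff stable intersection constraint together with the $(2,2)$-or-$(4)$ contact pattern to pin each group of four centers to $T_x,T_y,T_z,O$ and to the closed rays at $S_x,S_y,S_z$, with the genericity conditions of Definition~\ref{d:generic} excluding any slide along those rays. The only cosmetic difference is that case (4) needs no further argument, since Proposition~\ref{p:groupsof4}(4) already places those four centers exactly at the origin.
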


\begin{remark}
It is straightforward to calculate the coordinates of each of these points exactly in terms of the $a_{ij}$'s, using the duality between tropical plane curves and their lifted Newton subdivisions \cite{ms}.
Thus for almost all honeycomb quartics $X$, we can produce an explicit formula for the tropicalizations of the 28 bitangents of $X$ in terms of the coefficients of its defining equation.
\end{remark}

\begin{proof}[Proof of Theorem~\ref{t:hexagonal}]
By Proposition~\ref{p:groupsof4}, applied to $X$ under a change of coordinates if necessary so that $O = (0,0)$, each of the seven regions shown in Figure~\ref{fig:sectors} supports exactly four $P_i$'s.  Thus part (4) is immediate.  Now to prove (3), we know that four $P_i$'s lie southwest of $O$. Then each of the north and east rays of the corresponding lines $L_i$ must intersect $\Trop X$ in a single connected component of stable intersection multiplicity 2 (again, and throughout, by \cite{or}).  It follows that $P_i$ must lie on or below the closed segment $E_{21,31}$; and similarly, it must lie on or to the left of the closed segment $E_{12,13}$.  Therefore four points $P_i$ are exactly $T_z$.  A similar argument shows (1) and (2).  

\begin{figure}
\begin{center}
\scalebox{1}{
\begin{tikzpicture}[my_node/.style={fill, circle, inner sep=1.75pt}, scale=.5]
\draw[-,ultra thick ] (-1,1) -- (-1,-2) -- (-2,-3) -- (-2,-4) -- (1,-4) -- (2,-3) -- (2,-2) -- (5,1) -- (5,3) -- (1,3) -- (-1,1) -- (-2,1) -- (-4,-1) -- (-4,-3) -- (-2,-3);
\draw[-,ultra thick] (-1,-2) -- (2,-2);
\draw[-,ultra thick] (-6,-1) -- (-4,-1);
\draw[-,ultra thick] (1,-6) -- (1,-4);
\draw[-,ultra thick] (5,3) -- (6,4);
\draw[->,ultra thick] (-6,-1) -- (-6,2);
\draw[->,ultra thick] (-2,1) -- (-2,5.46);
\draw[->,ultra thick] (1,3) -- (1,6.79);
\draw[->,ultra thick] (6,4) -- (6,7);
\draw[->,ultra thick] (6,4) -- (9,4);
\draw[->,ultra thick] (5,1) -- (8.76,1);
\draw[->,ultra thick] (2,-3) -- (6.82,-3);
\draw[->,ultra thick] (1,-6) -- (4,-6);
\draw[->,ultra thick] (1,-6) -- (-1,-8);
\draw[->,ultra thick] (-2,-4) -- (-4.31,-6.31);
\draw[->,ultra thick] (-4,-3) -- (-5.74,-4.74);
\draw[->,ultra thick] (-6,-1) -- (-7,-2);
\draw[-,dashed,ultra thick,gray!70] (-2,1) -- (0,3) -- (1,3);
\draw[-,dashed,ultra thick,gray!70] (5,1) -- (5,0) -- (2,-3);
\draw[-,dashed,ultra thick,gray!70] (-2,-4) -- (-4,-4) -- (-4,-3);
\draw[-,dashed,ultra thick,gray!70] (-1,-2) -- (4,3);
\draw[-,dashed,ultra thick,gray!70] (-1,-2) -- (-1,-4);
\draw[-,dashed,ultra thick,gray!70] (-1,-2) -- (-4,-2);
\node[my_node, label=135:{$O$}] at (-1,-2){};
\node[my_node, label=180:{$S_x$}] at (-4,-2){};
\node[my_node, label=270:{$S_y$}] at (-1,-4){};
\node[my_node, label=90:{$S_z$}] at (4,3){};
\node[my_node, label=0:{$T_x$}] at (5,0){};
\node[my_node, label=90:{$T_y$}] at (0,3){};
\node[my_node, label=270:{$T_z$}] at (-4,-4){};
\end{tikzpicture}
}
\caption{The seven locations $O, T_x, T_y, T_z, S_x, S_y, S_z$, of the centers of the bitangents to a generic honeycomb quartic. Exactly $4$ bitangents are centered at each of the locations.} \label{fig:generic honeycomb bitangents}
\end{center}
\end{figure}
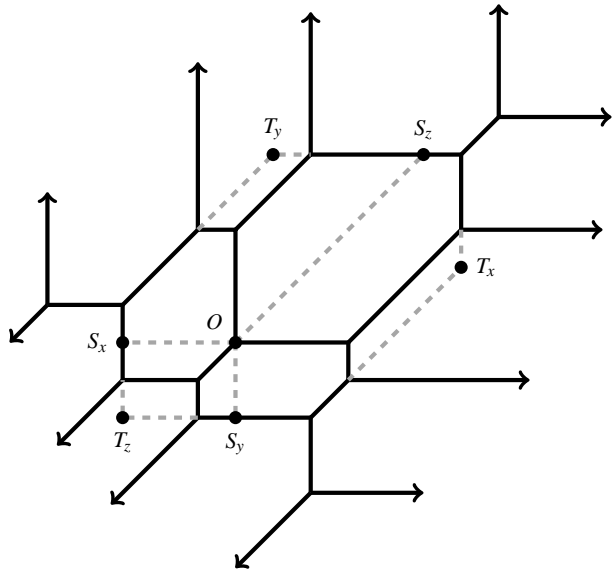

Now for (5), Proposition~\ref{p:groupsof4} guarantees that four of the $P_i$ lie in the direction $(1,1)$, i.e.~due northeast, from $O$.  If $P_i$ lies within the open region $R_{11}$, then $L_i \cap \Trop X$ would have at least three distinct connected components, contradicting that $L_i$ is a tropicalization of a bitangent.  This proves that (5) holds.  Moreover, if $X$ is generic, then the only way that the connected component of $L_i\cap \Trop X$ that contains $S_z$ can have stable intersection multiplicity 2 is if $P_i = S_z$, since otherwise $L_i\cap \Trop X$ would have more than two connected components.  This proves that four of the $P_i$s are $S_z$ when $X$ is a generic honeycomb curve.  The statements (6) and (7) are argued analogously.  
\qed \end{proof}

We note that in the case of nongeneric honeycombs, the bitangent centers need not be grouped in fours and may even appear in the regions $R_{00}, R_{40},$ and $R_{04}$, as we demonstrate in the next section; see Figure~\ref{fig:honeycomb}.

\begin{remark}\label{r:nochange}
Classically, a smooth plane quartic curve is determined uniquely by its 28 bitangents \cite{cs, lehavi}.  It is natural to ask whether the same could be true tropically.  Precisely: if $X$ and $X'\subset \PP^2$ are plane quartics with smooth tropicalization, and the 28 tropicalizations of their bitangents agree, does it follow that the tropicalizations of $X$ and $X'$ agree?  Theorem~\ref{t:hexagonal} shows that the answer is no.  In fact, it allows us to construct infinite families of tropical honeycomb quartics with the property that all lifts of them to classical curves have the same 28 tropicalized bitangents.  For example, any two curves tropicalizing to the honeycomb curve in Figure~\ref{fig:hexagonal} and the one obtained from it by shrinking region $R_{20}$ (as shown in Figure~\ref{fig:hexagonal} in grey) have the same 28 tropicalized bitangents.  This follows immediately from Theorem~\ref{t:hexagonal}, assuming that both tropical curves are generic in the sense of Definition~\ref{d:generic}.
\end{remark}

\section{Computing the $28$ bitangents of a honeycomb quartic}\label{s:compute}

In the final section of this paper, we demonstrate a computation of the Puiseux expansions of the 28 bitangents of a $K_4$-quartic, which we are able to carry out to any desired precision.  Our computation gives an example of how tropical geometry may be used in computations that are not a priori tropical.
Let $K=\pui$ in this section. Our example will be the smooth plane quartic $X$ defined by the equation
\begin{align}\label{eq:f}
\begin{split}
f(x,y,z) &=xyz(x+y+z) + t(x^2y^2 + x^2z^2 + y^2z^2) \\
&+ t^2(x^3y+xy^3+x^3z+xz^3 + y^3z+yz^3) + t^5(x^4+y^4+z^4).
\end{split}
\end{align}
The tropicalization of $X$ is a smooth tropical plane curve in honeycomb form in which every bounded segment has lattice length 1.  It is an example of a non-generic honeycomb curve, in the sense of Definition~\ref{d:generic}.  All the computations in this section were carried out in \emph{Macaulay2} \cite{m2}.  The results of our computations are summarized in Table~\ref{table:28}.

Let $\mathcal{B}$ be the set of all $(A,B) \in \pui^2$ such that $Ax+By+z=0$ is a bitangent to $f$. It is straightforward to verify that $f$ admits no bitangent of the form $Ax+By=0$. Therefore, $|\mathcal{B}| = 28$. We begin by observing that $Ax+By+z=0$ is a bitangent to $X$ if and only if the polynomial $f(x,y,-Ax-By)$ is a perfect square.  To detect this condition, we introduce a {\em square-detecting} ideal in the next lemma.

\begin{lemma} \label{p:sdi}
Let $K$ be an algebraically closed field of characteristic 0.  Let  $J \subseteq K[X_0, \dots, X_4]$ be the ideal generated by the seven cubic polynomials 
\begin{align*}
& 8X_1X_4^2-4X_2X_3X_4+X_3^3, \\
& 16X_0X_4^2+2X_1X_3X_4-4X_2^2X_4+X_2X_3^2, \\ & 8X_0X_3X_4-4X_1X_2X_4+X_1X_3^2, \\&  X_0X_3^2-X_1^2X_4, \\ & 8X_0X_1X_4-4X_0X_2X_3+X_1^2X_3, \\
& 16X_0^2X_4+2X_0X_1X_3-4X_0X_2^2+X_1^2X_2, \\ & 8X_0^2X_3-4X_0X_1X_2+X_1^3.
\end{align*}
Then  $(c_0, \dots, c_4) \in V(J)$ if and only if the polynomial 
$$s(x,y) = c_4x^4+ c_3x^3y + c_2x^2y^2 + c_1xy^3 +c_0y^4 \in K[x,y]$$ is a perfect square.
\end{lemma}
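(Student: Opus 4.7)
The natural approach is to recognize that a binary quartic $s$ is a perfect square in $K[x,y]$ if and only if $s = (Ax^2 + Bxy + Cy^2)^2$ for some $A,B,C \in K$, which after expansion is the system
\[
(c_4,\ c_3,\ c_2,\ c_1,\ c_0) \;=\; (A^2,\ 2AB,\ 2AC+B^2,\ 2BC,\ C^2).
\]
Write $\phi\col K^3 \to K^5$ for this parametrization.  The lemma then asserts the identity $V(J) = \mathrm{Im}(\phi)$, and I would prove each inclusion separately.

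For $\mathrm{Im}(\phi) \subseteq V(J)$, one substitutes $X_0 = C^2$, $X_1 = 2BC$, $X_2 = 2AC+B^2$, $X_3 = 2AB$, $X_4 = A^2$ into each of the seven listed cubics and verifies that each becomes the zero polynomial in $A,B,C$.  This is entirely mechanical and can be done by hand or with a computer algebra system; it requires no characteristic or closure hypotheses.

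For the reverse inclusion, fix $(c_0,\ldots,c_4) \in V(J)$ and split into cases according to the vanishing of $c_0$ and $c_4$.  If $c_4 \ne 0$, choose $A \in K$ with $A^2 = c_4$ (using algebraic closure) and set $B := c_3/(2A)$ and $C := (4 c_2 c_4 - c_3^2)/(8 c_4 A)$, so that by construction the relations $c_4 = A^2$, $c_3 = 2AB$, and $c_2 = 2AC + B^2$ hold.  The remaining two relations $c_1 = 2BC$ and $c_0 = C^2$ follow by short rational manipulations from the vanishing of the first and second generators of $J$, respectively (the first generator at $(c_0,\ldots,c_4)$ rearranges to give precisely the formula for $2BC$, and then the second generator rearranges to give $C^2$).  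The case $c_0 \ne 0$ is handled by the symmetric argument, using the involution $X_i \leftrightarrow X_{4-i}$, which one checks permutes the set of seven generators (pairing generator $1$ with $7$, $2$ with $6$, $3$ with $5$, and fixing the fourth up to sign).  Finally, if $c_0 = c_4 = 0$, the first and last generators collapse to $c_3^3 = 0$ and $c_1^3 = 0$, so $c_1 = c_3 = 0$, and then $s = c_2\,(xy)^2 = (\sqrt{c_2}\, xy)^2$ is manifestly a square.

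The principal technical burden is the mechanical verification of the seven polynomial identities for the forward inclusion; once this is done, the reverse inclusion is a short exercise in solving for $(A,B,C)$ and reading the compatibility conditions off of generators $1$ and $2$ (and their mirror images under the $X_i \leftrightarrow X_{4-i}$ symmetry).  Conceptually, the lemma just computes defining equations for the image of the ``squaring map'' on binary quadratic forms.
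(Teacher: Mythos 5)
Your proposal is correct, and conceptually it starts from the same place as the paper: identify the perfect squares with the image of a squaring map and show that $J$ cuts out exactly that image. The executions differ, though. The paper parametrizes a square as $(Cx+Dy)^2(C'x+D'y)^2$ and disposes of the whole lemma in one line by ``expanding and eliminating $C,D,C',D'$'' --- i.e.\ a computer-algebra elimination whose output is asserted to be $J$. You instead parametrize by the three coefficients of a quadratic form $Ax^2+Bxy+Cy^2$ (equivalent over an algebraically closed field, since any homogeneous square root of a binary quartic is a binary quadratic, which then splits into linear factors) and prove both inclusions by hand: the forward inclusion by substituting into the seven cubics, and the reverse by explicitly solving $A^2=c_4$, $B=c_3/(2A)$, $C=(4c_2c_4-c_3^2)/(8c_4A)$ and reading $c_1=2BC$ and $c_0=C^2$ off generators $1$ and $2$, with the boundary cases handled via the $X_i\leftrightarrow X_{4-i}$ symmetry (which does permute the generators as you claim, negating the fourth). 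Your computations check out, including the slightly nontrivial verification that $c_0=C^2$ follows from generators $1$ and $2$ together. What your route buys is a point the paper's one-liner glosses over: elimination of parameters a priori yields only the Zariski \emph{closure} of the image, whereas the lemma asserts $V(J)$ equals the image on the nose; your explicit inverse construction establishes exactly that. The cost is the mechanical verification of seven polynomial identities, which the paper outsources to the machine.
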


\begin{proof}
Since $K$ is algebraically closed, the polynomial $s(x,y)$ is a perfect square if and only if there exist  $C,D,C',D' \in K$ such that
\[
s(x,y) = (Cx+Dy)^2(C'x+D'y)^2.
\]
Expanding 
and eliminating $C,D,C',$ and $D'$ 
produces the ideal $J$ above.
\qed \end{proof}

Now we expand $f(x,y,-Ax-By)$ as a homogeneous quartic polynomial in $x$ and $y$ whose five coefficients are polynomials in $A$ and $B$.  Substituting these five polynomials for $X_0,\ldots,X_4$ in $J$  yields an ideal $I\subset K[A,B]$ generated by seven polynomials whose variety is $\mathcal{B}$, the bitangents of $X$.  
(The equations of $I$ are not shown here, due to the length of output.)

Our goal is to compute $V(I)$. Even though this variety is just 28 points, it is not simple to carry out its computation over the field of Puiseux series.  We now explain our strategy for carrying it out. First, we will determine the 28 {valuations} $(\val(A),\val(B))\in \RR^2$ of the bitangent coefficients, i.e.~we will compute the locations of the 28 tropicalized bitangents.  
For this we use elimination theory and Newton polygons to determine the valuations of $V(I)$ under specially chosen projections.
Second, we will bound the {denominators} of the exponents of $t$ that show up in the Puiseux expansions of pairs $(A,B)$, allowing us to pass from Puiseux series to power series (after an appropriate base change).  
Finally, we will use repeated specialization to $t=0$ to successively compute the the Puiseux expansions of the bitangent coefficients at each of the determined locations.

The first step is accomplished in the proposition below, whose proof makes use of computations in \emph{Macaulay2}.  See Figure~\ref{fig:honeycomb}.  (Note that a bitangent $(A,B)\in\mathcal{B}$ tropicalizes to a line centered at $(-\val(A),-\val(B))\in\RR^2$.)

\begin{figure}
\begin{center}
\scalebox{1}{
\begin{tikzpicture}[my_node/.style={fill, circle, inner sep=3.18pt},my_small_node/.style={fill, circle, inner sep=2.25pt}, scale=.8]
\draw[-,ultra thick] (0,1) -- (0,0) -- (1,0) -- (2,1) -- (2,2) -- (1,2) -- (0,1) -- (-1,1) -- (-2,0) -- (-2,-1) -- (-1,-1) -- (-1,-2) -- (0,-2) -- (1,-1) -- (1,0) -- (0,0) -- (-1,-1);
\draw[-,ultra thick] (-3,0) -- (-2,0);
\draw[-,ultra thick] (0,-3) -- (0,-2);
\draw[-,ultra thick] (2,2) -- (3,3);
\draw[->,ultra thick] (3,3) -- (3,4.886);
\draw[->,ultra thick] (3,3) -- (4.886,3);
\draw[->,ultra thick] (1,2) -- (1,4.571);
\draw[->,ultra thick] (2,1) -- (4.571,1);
\draw[->,ultra thick] (-1,1) -- (-1,4);
\draw[->,ultra thick] (1,-1) -- (4,-1);
\draw[->,ultra thick] (-3,0) -- (-3,3);
\draw[->,ultra thick] (0,-3) -- (3,-3);
\draw[->,ultra thick] (-3,0) -- (-4.702,-1.702);
\draw[->,ultra thick] (-2,-1) -- (-4,-3);
\draw[->,ultra thick] (-1,-2) -- (-3,-4);
\draw[->,ultra thick] (0,-3) -- (-1.702,-4.702);
\node[my_small_node, label=45:{$4$}] at (0,0){};
\node[my_small_node, label=45:{$4$}] at (2,0){};
\node[my_small_node, label=45:{$4$}] at (0,2){};
\node[my_small_node, label=45:{$4$}] at (-2,-2){};
\node[my_small_node, label=90:{$2$}] at (2,2){};
\node[my_small_node, label=45:{$2$}] at (4,4){};
\node[my_small_node, label=90:{$2$}] at (0,-2){};
\node[my_small_node, label=45:{$2$}] at (0,-4){};
\node[my_small_node, label=90:{$2$}] at (-2,0){};
\node[my_small_node, label=45:{$2$}] at (-4,0){};
\end{tikzpicture}
}
\caption{The $28$ centers of the bitangents to the honeycomb quartic \eqref{eq:f}.} \label{fig:honeycomb}
\end{center}
\end{figure}
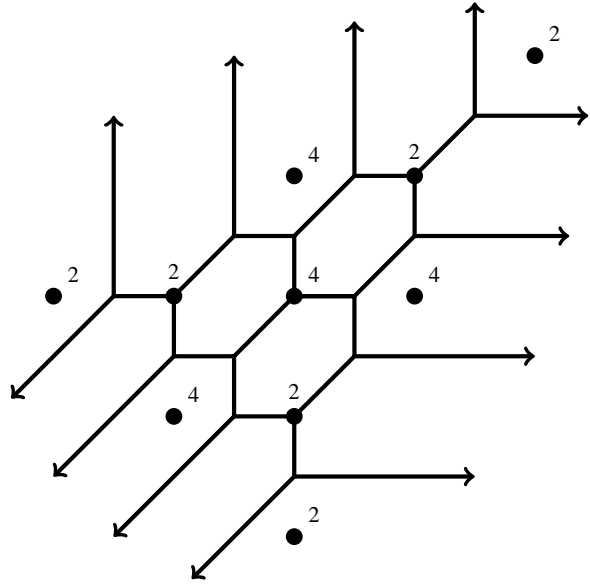

\begin{proposition}\label{p:locations}
The 28 bitangents $Ax+By+z=0$ of $X=V(f)$ include
\begin{itemize}
\item exactly 4 with $(\val(A), \val(B)) = (0, 0)$,
\item exactly 4 with $(\val(A), \val(B)) = (-2, 0)$, 
\item exactly 4 with $(\val(A), \val(B)) = (0, -2),$ 
\item exactly 4 with $(\val(A), \val(B)) = (2, 2),$
\item exactly 2 with $(\val(A), \val(B)) = (-2, -2),$ 
\item exactly 2 with $(\val(A), \val(B)) = (-4, -4),$ 
\item exactly 2 with $(\val(A), \val(B)) = (2, 0),$
\item exactly 2 with $(\val(A), \val(B)) = (4, 0),$
\item exactly 2 with $(\val(A), \val(B)) = (0, 2),$
\item exactly 2 with $(\val(A), \val(B)) = (0, 4).$
\end{itemize}
\end{proposition}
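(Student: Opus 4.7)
The plan is to first use Theorem~\ref{t:hexagonal} to constrain the bitangent centers to seven loci, and then resolve the remaining positional ambiguity by an explicit elimination-and-Newton-polygon computation on the square-detecting ideal $I \subset K[A,B]$ of Lemma~\ref{p:sdi}. One checks that for this $f$ the three non-genericity conditions of Definition~\ref{d:generic} all fail (each reads $2+0-2-0=0$), so Theorem~\ref{t:hexagonal} pins down four $P_i$ at each of $O, T_x, T_y, T_z$ but only confines the remaining twelve to lie four on each of three rays in directions $(1,1), (-1,0), (0,-1)$. Computing $O, T_x, T_y, T_z$ from the tropical data of $f$ identifies these four isolated points with the four valuation pairs in the proposition of multiplicity four, so the task reduces to verifying the $2+2$ split of twelve bitangents across the three rays and locating each pair exactly.

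To do this, I would first construct $I$ in \emph{Macaulay2}: expand $f(x,y,-Ax-By)$ as a homogeneous quartic in $x,y$ with coefficients in $K[A,B]$, then substitute these five coefficients for $X_0, \dots, X_4$ in the seven cubic generators of $J$ from Lemma~\ref{p:sdi}. By Lemma~\ref{p:sdi} the zero-dimensional ideal $I$ satisfies $V(I) = \mathcal{B}$. I would then compute the univariate eliminants $I \cap K[A]$ and $I \cap K[B]$, and if needed one further eliminant such as $I \cap K[AB]$, each a degree-$28$ polynomial over $\pui$. The Newton polygon of each eliminant, read with respect to the $t$-adic valuation, yields by the Newton--Puiseux theorem the multiset of $t$-adic valuations of its roots. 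Because Theorem~\ref{t:hexagonal} already constrains each remaining bitangent to a one-parameter ray, matching the univariate multisets against those constraints uniquely pins down the 28 pairs $(\val A, \val B)$.

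The main obstacle is the size of the elimination computation over $\pui$: the seven cubic generators of $J$ substituted with polynomials of bidegree four in $(A,B)$ with Puiseux-series coefficients yield ideal generators whose expansion is unwieldy, and Gr\"obner-basis elimination over $\pui$ is expensive. I would mitigate this by exploiting the evident $\mathfrak{S}_3$-symmetry of $f$ under permutations of $(x,y,z)$, which induces a symmetry on $\mathcal{B}$ cycling the three rays and the pairs $(T_x, S_x), (T_y, S_y), (T_z, S_z)$. This reduces the ray analysis to a single case; the distributions on the other two rays then follow by symmetry, yielding exactly the ten valuation pairs with multiplicities stated in the proposition.
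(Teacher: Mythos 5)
Your overall skeleton coincides with the paper's: Theorem~\ref{t:hexagonal} pins down sixteen of the centers at $O,T_x,T_y,T_z$ and confines the remaining twelve to the three rays (your check that all three genericity conditions of Definition~\ref{d:generic} vanish for this $f$ is correct), the $\mathfrak{S}_3$-symmetry of $f$ reduces the ray analysis to a single ray, and an elimination-plus-Newton-polygon computation on the square-detecting ideal $I$ is meant to finish the job. The logical matching step at the end is also sound: given the full multiset of $\val(A)$ over all $28$ bitangents, the ray constraints do force the $2+2$ splits claimed in the proposition.

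The genuine gap is in the one computation your plan actually depends on. You propose to compute the univariate eliminants $I\cap K[A]$ and $I\cap K[B]$, each of degree $28$ over $\pui$; but this is precisely the computation the paper reports as infeasible --- the authors state explicitly that eliminating $B$ from $I$ to obtain a single polynomial describing all $28$ values of $A$ ``is actually too large for \emph{Macaulay2} to handle.'' Your $\mathfrak{S}_3$ mitigation reduces the \emph{number} of eliminations to one, but does nothing to shrink the \emph{size} of that one Gr\"obner computation, so the plan stalls exactly where the difficulty lies. The paper's workaround is a different change of coordinates: eliminate $B$ from $I$ rewritten in terms of $A'=A+B$, so that the swap symmetry $(A,B)\mapsto(B,A)$ collapses the $28$ bitangents onto only $16$ values of $A+B$ and the eliminant $p(A')$ has degree $16$, which \emph{is} computable. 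The price is that the valuations of $A+B$ do not directly give the pairs $(\val A,\val B)$, so the paper needs three further auxiliary computations --- squarefreeness of $p$ and the Newton polygon of its $17$ coefficients, a bound $\val(A+B+1)\le 0$ obtained from a second eliminant, and the four diagonal bitangents $(A,A)$ found by substituting $B=A$ --- which it then combines with the symmetry $(A,B)\mapsto(A/B,1/B)$ to recover the joint valuations on the diagonal ray. You would either need to supply an analogue of this degree-halving trick (together with a verification that your eliminant is squarefree of the expected degree, so that its Newton polygon really records all $28$ roots with multiplicity), or demonstrate that the degree-$28$ elimination is in fact tractable, before your argument constitutes a proof.
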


\begin{proof}
The first four statements are immediate from Theorem~\ref{t:hexagonal}.  So by symmetry, we just need to prove that there are exactly two tropicalized bitangents centered at $(2,2)$ and two centered at $(4,4)$.  

Let $(A,B)\in\mathcal{B}$.  First, we use the classical method of the Newton polygon (equivalently, tropical geometry in dimension 1, see e.g.~\cite[Proposition 3.4.8]{ms}) on some special changes of coordinates to get information about the possible values of $A+B$ and $A+B+1$. 
Rewriting the ideal $I\subset K[A,B]$ in terms of coordinates $A'=A+B$ and $B$ and eliminating $B$ from the result produces an ideal  that is principally generated by a polynomial $p(A') $ with the property that 
$$p(A')=0 \qquad \text{if and only if} \qquad A' = A+B \text{ for some } (A,B)\in \mathcal{B}.$$
(The reason that we do not simply eliminate $B$ right away, and recover a polynomial describing all 28 possible values of $A$, is that that computation is actually too large for \emph{Macaulay2} to handle.  In contrast, because of the symmetry of $f$, there are {\em fewer} possible values of $A+B$, and it turns out that this difference is just enough to make the computation of $p$ feasible.)

\begin{claimm} \label{a+b} The polynomial $p$ is a squarefree polynomial of degree 16.  The roots of $p$ have valuations $-4,-2,0,$ and $2$, and these valuations are attained with multiplicity $1,5,7,$ and $3$ respectively.
\end{claimm}
\begin{proof}
We calculated $p$ in \emph{Macaulay2}. To check that it is squarefree, we computed a specialization, say $t=1$, of the resultant of $p$ and $p'$ and noted that it is nonzero (the actual resultant of $p$ and $p'$ is much too large to compute exactly).  The valuations of the roots of $p$ are determined, with multiplicity, by the valuations of the 17 coefficients of $p$, via the method of the Newton polygon.  The conclusion follows.  
\qed \end{proof}

\begin{claimm}\label{a+b+1}
If $(A,B)\in\mathcal{B}$ then $\val(A+B+1)\le 0$.
\end{claimm}
\begin{proof}
Using \emph{Macaulay2}, we rewrote $I$ in terms of coordinates $A''=A+B+1$ and $B$, then eliminated $B$.  The result is a polynomial whose Newton polygon is comprised of segments of nonnegative slope.
\qed \end{proof}

\begin{claimm}\label{a+a}
There are exactly four points of $\mathcal{B}$ of the form $(A,A)$; their valuations are $(0,0)$ and $(2,2)$ with multiplicity 2 each.
\end{claimm}

\begin{proof}
We substituted $B=A$ into $I$ and computed a Gr\"obner basis. The result is an ideal principally generated by a degree four polynomial; analyzing its Newton polygon yields the claim about the valuations.
\qed \end{proof}

\begin{claimm}\label{2or4}
If $(A,B)\in\mathcal{B}$ such that $(\val(A),\val(B)) =(-a,-a)$ for some $a\ge 2$, then $\val(A+B)=-a$.  Hence $(\val(A),\val(B))$ is actually either $(-4,-4)$ or $(-2,-2)$.
\end{claimm}
\begin{proof}
The second statement follows from the first by Claim~\ref{a+b}.  To prove the first, suppose instead $\val(A+B) > \val(A)=\val(B)$.  Now by symmetry, if $Ax+By+z=0$ is a bitangent, then $Ax+y+Bz=0$ is also a bitangent; that is, $(\frac{A}{B},\frac{1}{B})\in\mathcal{B}$.  Therefore, by Claim~\ref{a+b+1}, $\val(\frac{A}{B}+\frac{1}{B}+1) \le 0$.  But $\val(\frac{1}{B}) > 0$ and $\val(\frac{A+B}{B}) > 0$, contradiction.
\qed \end{proof}

Now we have assembled all the computational results we needed to prove Proposition~\ref{p:locations}.  First, by Claim~\ref{a+b}, exactly 16 distinct values of $A+B$ occur. Now by Claim~\ref{a+a}, there are four bitangents of the form $(A,A)$; furthermore, the remaining 24 bitangents form twelve pairs $(A,B)$ and $(B,A)$, by symmetry of $f$.  From these facts we conclude that if two points $(A,B)\ne (A',B')\in\mathcal{B}$ are such that $A+B=A'+B'$, then $(A',B')=(B,A)$.  Then the fact that $p$ has a unique root of valuation $-4$ (Claim~\ref{a+b}) implies that there is exactly one pair of bitangents $(A,B)\ne (B,A)\in\mathcal{B}$ with $\val(A+B) = -4$.  
By Claim~\ref{2or4}, these must be the unique pair of bitangents that have valuation $(-4,-4)$: indeed, all other cases given in Theorem~\ref{t:hexagonal} give pairs $(A,B)$ with $\val(A+B)>-4$.  Therefore, by Claim~\ref{2or4} and Theorem~\ref{t:hexagonal} again, exactly two bitangents have valuation $(-2,-2)$.
\qed \end{proof}

\begin{observation}\label{o:branching}
Suppose $(A,B)\in\mathcal{B}$ is one of $m$ bitangents that have the same tropicalization.  Let $n\in \mathbb{Z}_{>0}$ be the smallest number such that $A,B\in \CC(\!(t^{1/n})\!)$.  Then $n\le m$.  Indeed, the automorphisms $\CC(\!(t^{1/n})\!) / \CC(\!(t)\!)$ preserve valuations, so already produce $n$ bitangents with the same tropicalization.
\end{observation}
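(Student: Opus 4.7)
The plan is to make precise the Galois-theoretic hint in the statement. Write $K_n = \CC(\!(t^{1/n})\!)$; by minimality of $n$, both $A$ and $B$ lie in $K_n$. The extension $K_n / \CC(\!(t)\!)$ is Galois of degree $n$ with cyclic Galois group $G \cong \ZZ/n$ generated by $\sigma \colon t^{1/n} \mapsto \zeta_n t^{1/n}$ for some fixed primitive $n$-th root of unity $\zeta_n$. The first step is to observe that $G$ acts on $\mathcal{B}$. Indeed, the polynomial $f$ defining $X$ has coefficients in $\CC[\![t]\!] \subset \CC(\!(t)\!)$, and the bitangent condition (that $f(x,y,-Ax-By)$ is a perfect square, as in Lemma~\ref{p:sdi}) is defined by polynomials in $K[A,B]$ whose coefficients are themselves in $\CC(\!(t)\!)$. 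Therefore, if $(A,B) \in \mathcal{B}$ lies in $K_n^2$, then so does $(\sigma(A), \sigma(B))$ for each $\sigma \in G$.

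Next I would use the uniqueness of the nonarchimedean valuation on $K_n$ extending the $t$-adic valuation on $\CC(\!(t)\!)$ to conclude that every $\sigma \in G$ satisfies $\val \circ \sigma = \val$. Consequently, the entire $G$-orbit of $(A,B)$ consists of bitangents whose tropicalizations are centered at the same point $(-\val(A), -\val(B)) \in \RR^2$. So the orbit size is at most $m$.

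Finally, I would compute the orbit size. Let $H \le G$ be the stabilizer of $(A,B)$; since $G$ is cyclic of order $n$, we have $H = \langle \sigma^{k} \rangle$ for some $k \mid n$, and the fixed field $K_n^H$ equals $K_{n/k} = \CC(\!(t^{1/(n/k)})\!)$. Since $H$ fixes $A$ and $B$, both belong to $K_{n/k}$; by the minimality of $n$, this forces $n/k = n$, i.e.\ $H$ is trivial. Hence the orbit has size exactly $|G| = n$, and combining with the previous paragraph gives $n \le m$.

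There is no serious obstacle here; the only point requiring care is the minimality discussion, since the subfields $K_d$ of $\pui$ form a lattice under the divisibility order (with $K_{d_1} \cap K_{d_2} = K_{\gcd(d_1,d_2)}$), so the existence of a smallest $n$ with $A,B \in K_n$ must be invoked to rule out a strictly smaller fixed field of the stabilizer. Once that is noted, the Galois-orbit computation is immediate.
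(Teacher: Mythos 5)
Your proof is correct and is essentially the paper's own argument made explicit: the paper's entire justification is the one-line remark that the automorphisms of $\CC(\!(t^{1/n})\!)/\CC(\!(t)\!)$ preserve valuations and hence already produce $n$ bitangents with the same tropicalization, and your Galois-orbit computation --- the group acts on $\mathcal{B}$ because the defining ideal has coefficients in $\CC(\!(t)\!)$, preserves valuations, and has trivial stabilizer at $(A,B)$ by minimality of $n$ --- is exactly what that remark leaves implicit. One small indexing slip: the fixed field of $H=\langle\sigma^k\rangle$ (a subgroup of order $n/k$) is $K_k=\CC(\!(t^{1/k})\!)$ rather than $K_{n/k}$, so minimality forces $k=n$ and hence $H=\{1\}$; the conclusion is unaffected.
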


Now, fix a point $(a,b)\in\RR^2$ that is a possible tropicalization of $(A,B)\in\mathcal{B}$, as determined in Proposition~\ref{p:locations}.  Fix an integer $n$.  We now explain how to compute, at least in principle, the Puiseux expansions of the bitangents $(A,B)$ that lie in $\CC(\!(t^{1/n})\!)^2$ and whose tropicalizations are $(a,b)$.  After that, we will explain the alteration that we need for the computation to succeed in practice.

First, after a base change $s^n=t$, we may assume $n=1$; then, after a change of coordinates, we may assume $(a,b) = (0,0)$.  Then by \cite[Proposition 4.4]{gubler}, the variety
$$\{(a,b)\in\CC^2~:~ \text{there is } (A,B)=(a+\text{higher terms}, b+ \text{higher terms})\in\mathcal{B}\}$$
is cut out by the ideal $(I\cap R[A,B])|_{t=0}$. 

Then, for each $(a,b)\in(\CC^*)^2$ in the variety above, we make a change of coordinates $$A=a+tA', \qquad B=b+tB'$$ and repeat the process to obtain the next coefficient in each expansion, and so on, to desired precision.  
We repeat this computation for each possible location found in Proposition~\ref{p:locations} and for each $n\le m$ to guarantee, by Observation~\ref{o:branching}, that we find all 28 bitangents.  In our example, only $m=2$ and $m=4$ occur. 

Now, this algorithm works in principle, but in practice, finding the saturation $(I:t^\infty)$ is much too slow.  So when we run the algorithm above, we make a small change: we simply use $I|_{t=0}$, which is easy to compute, instead of $(I:t^\infty)|_{t=0}$.  We have a scheme-theoretic inclusion $V(I|_{t=0}) \supseteq V((I:t^\infty)|_{t=0})$, so in particular, if $\dim V(I|_{t=0}) = 0$ then $\text{deg } V(I|_{t=0}) \ge \text{deg } V((I:t^\infty)|_{t=0})$. 

In other words, using $I|_{t=0}$ at each stage gives us only a set-theoretic upper bound on the possible bitangents $(A,B)\in\mathcal{B}$, computed to arbitrary precision; but if ever the {\em sum} of the degrees of all of the zero-dimensional schemes in question drops to $28$, then we know that the 28 Puiseux expansions we have computed up to that point do correspond to true bitangents.  This termination condition does indeed happen in our example.  The results of our computations are shown in Table~\ref{table:28}.

\begin{table}[t] 
\centering 
\begin{tabular}{c c c} 
\hline\hline 
$\val$ & $(A,B)$ \\ [0.7ex] 
\hline 
\footnotesize $(0,0)$ & \footnotesize$(1,1), (1+4t+4t^3-24t^4+\dots, 1+4t+4t^3-24t^4+\dots)$ \\ [0.7ex]
 & \footnotesize $\left(1, 1-4t+16t^2-68t^3+\dots\right), \left(1-4t+16t^2-68t^3+\dots, 1\right)$ \\ [0.7ex]
\footnotesize $(2,2)$ & \footnotesize $\left(t^2 + 2i \cdot t^{\frac{5}{2}} -2t^3 - 5i \cdot t^{\frac{7}{2}} + \dots, t^2 + 2i t^{\frac{5}{2}} -2t^3 - 5i \cdot t^{\frac{7}{2}} + \dots\right)$ \\ [0.7ex]
 & \footnotesize $\left(t^2 + 2i \cdot t^{\frac{5}{2}} -2t^3-5i \cdot t^{\frac{7}{2}} + \dots, t^2 - 2i t^{\frac{5}{2}} -2t^3+5i\cdot t^{\frac{7}{2}}+ \dots\right)$ \\ [0.7ex]
& \footnotesize $\left(t^2 - 2i \cdot t^{\frac{5}{2}} -2t^3 + 5i \cdot t^{\frac{7}{2}} + \dots, t^2 - 2i t^{\frac{5}{2}} -2t^3 + 5i \cdot t^{\frac{7}{2}} + \dots\right)$ \\ [0.7ex]
 & \footnotesize $\left(t^2 - 2i \cdot t^{\frac{5}{2}} -2t^3+5i \cdot t^{\frac{7}{2}} + \dots, t^2 + 2i t^{\frac{5}{2}} -2t^3-5i\cdot t^{\frac{7}{2}}+ \dots\right)$ \\ [0.7ex]
\footnotesize $(0,-2)$ & \footnotesize $\left(1,t^{-2} + 2i \cdot t^{-\frac{3}{2}} -2t^{-1} - 5i \cdot t^{-\frac{1}{2}} + \dots \right)$ \\ [0.7ex] 
& \footnotesize $\left(1,t^{-2} - 2i \cdot t^{-\frac{3}{2}} -2t^{-1} + 5i \cdot t^{-\frac{1}{2}} + \dots \right)$ \\ [0.7ex]
& \footnotesize $\left(1+4i \cdot t^{\frac{1}{2}} - 8t -18it^{\frac{3}{2}} + \dots, t^{-2} +2i \cdot t^{-\frac{3}{2}} -2t^{-1} -5i \cdot t^{-\frac{1}{2}} + \dots \right)$ \\ [0.7ex]
& \footnotesize $\left(1-4i \cdot t^{\frac{1}{2}} - 8t +18it^{\frac{3}{2}} + \dots, t^{-2} -2i \cdot t^{-\frac{3}{2}} -2t^{-1} +5i \cdot t^{-\frac{1}{2}} + \dots \right)$ \\ [0.7ex]
\footnotesize $(-2,0)$ & \footnotesize $\left(t^{-2} + 2i \cdot t^{-\frac{3}{2}} -2t^{-1} - 5i \cdot t^{-\frac{1}{2}} + \dots, 1 \right)$ \\ [0.7ex]
& \footnotesize $\left(t^{-2} - 2i \cdot t^{-\frac{3}{2}} -2t^{-1} + 5i \cdot t^{-\frac{1}{2}} + \dots, 1 \right)$ \\ [0.7ex]
& \footnotesize $\left(t^{-2} +2i \cdot t^{-\frac{3}{2}} -2t^{-1} -5i \cdot t^{-\frac{1}{2}} + \dots, 1+4i \cdot t^{\frac{1}{2}} - 8t -18it^{\frac{3}{2}} + \dots \right)$ \\ [0.7ex]
& \footnotesize $\left(t^{-2} -2i \cdot t^{-\frac{3}{2}} -2t^{-1} +5i \cdot t^{-\frac{1}{2}} + \dots, 1-4i \cdot t^{\frac{1}{2}} - 8t +18it^{\frac{3}{2}} + \dots \right)$ \\ [0.7ex]
\footnotesize $(2,0)$ & \footnotesize $\left(4t^2+4i\cdot t^{\frac{5}{2}}-12t^3-18i \cdot t^{\frac{7}{2}} + \dots, 1+2i \cdot t^{\frac{1}{2}} -2t -5i \cdot  t^{\frac{3}{2}} + \dots \right)$ \\ [0.7ex]
& \footnotesize $\left(4t^2-4i\cdot t^{\frac{5}{2}}-12t^3+18i \cdot t^{\frac{7}{2}} + \dots, 1-2i \cdot t^{\frac{1}{2}} -2t +5i \cdot t^{\frac{3}{2}} + \dots \right)$ \\ [0.7ex]
\footnotesize $(0,2)$ & \footnotesize $\left(1+2i \cdot t^{\frac{1}{2}} -2t -5i \cdot t^{\frac{3}{2}} + \dots, 4t^2+4i\cdot t^{\frac{5}{2}}-12t^3-18i \cdot t^{\frac{7}{2}} + \dots \right)$ \\ [0.7ex]
& \footnotesize $\left(1-2i \cdot t^{\frac{1}{2}} -2t +5i \cdot t^{\frac{3}{2}} + \dots, 4t^2-4i\cdot t^{\frac{5}{2}}-12t^3+18i \cdot t^{\frac{7}{2}} + \dots \right)$ \\ [0.7ex]
\footnotesize $(-2,-2)$ & \footnotesize $\left(\frac{1}{4} t^{-2} + \frac{i}{4} t^{-\frac{3}{2}} + \frac{1}{2} t^{-1} + \frac{i}{8} t^{-\frac{1}{2}} + \dots, \frac{1}{4}t^{-2} -\frac{i}{4} t^{-\frac{3}{2}} + \frac{1}{2} t^{-1} - \frac{i}{8} t^{-\frac{1}{2}} + \dots \right)$ \\ [0.7ex]
& \footnotesize $\left(\frac{1}{4}t^{-2} -\frac{i}{4} t^{-\frac{3}{2}} + \frac{1}{2} t^{-1} - \frac{i}{8} t^{-\frac{1}{2}} + \dots , \frac{1}{4} t^{-2} + \frac{i}{4} t^{-\frac{3}{2}} + \frac{1}{2} t^{-1} + \frac{i}{8} t^{-\frac{1}{2}} + \dots\right)$ \\ [0.7ex]
\footnotesize $(4,0)$ & \footnotesize $\left(2t^4+2i \cdot t^{\frac{9}{2}} -10t^5 -13i \cdot t^{\frac{11}{2}} + \dots, 1 + 2i \cdot t^{\frac{1}{2}} -2t-5i \cdot t^{\frac{3}{2}} + \dots \right)$ \\ [0.7ex]
& \footnotesize $\left(2t^4-2i \cdot t^{\frac{9}{2}} -10t^5 +13i \cdot t^{\frac{11}{2}} + \dots, 1 - 2i \cdot t^{\frac{1}{2}} -2t+5i \cdot t^{\frac{3}{2}} + \dots \right)$ \\ [0.7ex]
\footnotesize $(0,4)$ & \footnotesize $\left(1 + 2i \cdot t^{\frac{1}{2}} -2t-5i \cdot t^{\frac{3}{2}} + \dots, 2t^4+2i \cdot t^{\frac{9}{2}} -10t^5 -13i \cdot t^{\frac{11}{2}} + \dots \right)$ \\ [0.7ex]
& \footnotesize $\left(1 - 2i \cdot t^{\frac{1}{2}} -2t+5i \cdot t^{\frac{3}{2}} + \dots, 2t^4-2i \cdot t^{\frac{9}{2}} -10t^5 +13i \cdot t^{\frac{11}{2}} + \dots \right)$ \\ [0.7ex]
\footnotesize $(-4,-4)$ & \tiny $\left(\frac{1}{2} t^{-4} + \frac{i}{2} t^{-\frac{7}{2}} +2t^{-3} +\frac{5}{4} i \cdot t^{-\frac{5}{2}} + \dots, \frac{1}{2} t^{-4} - \frac{i}{2} t^{-\frac{7}{2}} +2t^{-3} -\frac{5}{4} i \cdot t^{-\frac{5}{2}} + \dots \right)$ \\  [0.7ex]
& \tiny $\left(\frac{1}{2} t^{-4} - \frac{i}{2} t^{-\frac{7}{2}} +2t^{-3} -\frac{5}{4} i \cdot t^{-\frac{5}{2}} + \dots, \frac{1}{2} t^{-4} + \frac{i}{2} t^{-\frac{7}{2}} +2t^{-3} +\frac{5}{4} i \cdot t^{-\frac{5}{2}} + \dots \right)$ \\
[1ex] 
\hline 
\end{tabular} 
\caption{Puiseux Series Expansion of $(A,B) \in \mathcal{B}$.} 
\label{table:28}
\end{table}

\bigskip

\begin{acknowledgement}
We are grateful to M.~Baker, Y.~Len, R.~Morrison, N.~Pflueger, and Q.~Ren for generously sharing their ideas on tropical plane quartics developed in the paper \cite{bitangent}.  Thanks also to M.~Baker, Y.~Len, and B.~Sturmfels for helpful comments on an earlier version of this paper, and J.~Rabinoff for helpful references.  We heartily thank M.~Manjunath, M.~Panizzut, and two anonymous referees for extensive and insightful comments on a previous version of this paper. We also thank W.~Stein and SageMathCloud for providing computational resources.  MC was supported by NSF DMS-1204278.  PJ was supported by the Harvard College Research Program during the summer of 2014.
\end{acknowledgement}

\end{document}